\documentclass[11pt]{amsart}
\usepackage{graphicx}
\textwidth=30cc \baselineskip=16pt
\textwidth 160mm \textheight 230mm \topmargin -5mm \oddsidemargin
-5mm \evensidemargin -5mm \baselineskip+6pt
\vfuzz2pt 
\hfuzz2pt 
\newtheorem{thm}{Theorem}[section]
\newtheorem{cor}[thm]{Corollary}
\newtheorem{lem}[thm]{Lemma}

\theoremstyle{definition}

\theoremstyle{remark}
\newtheorem{rem}[thm]{Remark}
\numberwithin{equation}{section}



\def\bc{{\mathbb C}}

\def\bm{{\mathbb M}}
\def\bn{{\mathbb N}}

\def\br{{\mathbb R}}

\def\a{\alpha}
\def\b{\beta}
\def\g{\gamma}  
  \def\D{\Delta}

\def\l{\lambda} 

\def\m{\mu}

\def\s{\sigma} 
\def\t{\tau}
\def\f{\varphi}

\def\tr{\mathop{\rm Tr}}
\def\id{{\bf 1}\!\!{\rm I}}
\def\fb{{\mathbf{f}}}

\def\bx{{\mathbf{b}}}

\def\D{\Delta}

\def\wb{{\mathbf{w}}}
\def\rb{{\mathbf{r}}}
\def\o{\otimes}

\def\a{\alpha}

\begin{document}
\title[Kadison-Schwartz operators]
{On characterizations of bistochastic Kadison-Schwarz operators on
$M_2(\mathbb{C})$}

\author{Farrukh Mukhamedov}
\address{Farrukh Mukhamedov\\
 Department of Computational \& Theoretical Sciences\\
Faculty of Science, International Islamic University Malaysia\\
P.O. Box, 141, 25710, Kuantan\\
Pahang, Malaysia} \email{{\tt far75m@yandex.ru}, {\tt
farrukh\_m@iium.edu.my}}

\author{Hasan Ak\i n}
\address{Hasan Ak\i n, Department of Mathematics, Faculty of Education,
 Zirve University, Kizilhisar Campus, Gaziantep, TR27260, Turkey}
\email{{\tt hasanakin69@gmail.com}}

\begin{abstract}

In this paper we describe bistochastic Kadison-Schawrz operators
acting on $M_2(\mathbb{C})$. Such a description allows us to find
positive, but not Kadison-Schwarz operators. Moreover, by means of
that characterization we construct Kadison-Schawrz operators,
which are not completely positive.

 \vskip 0.3cm \noindent {\it Mathematics Subject
Classification}: 47L07; 46L30; 47C15; 15A48; 81P68
60J99.\\
{\it Key words}: bistochastic mapping, Kadison-Schwarz operator,
complete positive.
\end{abstract}

\maketitle

\section{Introduction}

It is known that entanglement is one of the essential features of
quantum physics and is fundamental in modern quantum technologies
\cite{NC}. One of the central problems in the entanglement theory
is the discrimination between separable and entangled states.
There are several tools which can be used for this purpose. There
are many papers devoted to find a given state is separable (see
\cite{HH}). The most general approach to characterize quantum
entanglement uses a notion of an entanglement witness
\cite{HH1,Cr1,Ter}. One of the big advantages of entanglement
witness is that they provide an economic method of detection which
does not need the full information about the quantum state (see
for recent review \cite{CS1}). Interestingly, the entanglement
witnesses are deeply connected to a theory of positive maps in
operator algebras \cite{Cr,CS,Ha}. Therefore, it would interesting
to find some conditions for the positivity of given mappings. In
this direction there are several papers
\cite{BZ,Cr,Cr2,CS,Ha,MM1,RSW,RSC}. Therefore, it would
interesting to find some conditions for the positivity of given
mappings (see \cite{MM1}-\cite{MT}). In the literature the most
tractable maps, the completely positive mapping, have proved to be
of great importance in the study of quantum system (see
\cite{Choi,OP,OV,Paul,St}). It is therefore of interest to study
conditions stronger than positivity, but weaker than complete
positivity.
 Such a
condition is called {\it Kadison-Schwarz (KS) property}. Note that
KS-operators no need to be completely positive. In \cite{Rob}
relations between $n$-positivity of a map $\phi$ and the KS
property of certain map is established (see also \cite{Bh}). Some
ergodic properties of the Kadison-Schwarz maps were investigated
in \cite{LMM,Gr2,Rob1}. Unfortunately, like completely positive
maps, the description of Kadison-Schwarz maps is not provided.
Very recently, one of the authors of this paper in \cite{MA} has
described bistochastic KS-operators from $M_2(\mathbb{C})$ to
itself. But, in general, the problem still remains open.

In \cite{GM1998} it was proposed to study positive operators $P$
from a von Neumann algebra $M$ to its tensor square $M\otimes M$
(we refer a reader to \cite{GMR,GMb} for recent review on
quadratic operators). It turns out that this kind of mappings have
some applications to quantum information theory. One of such an
application is to detect entangled states. For example, let $P$ be
a block positive, then a state $\phi$ on the algebra $M\o M$ is
separable, then the state $P_*\phi$ is positive. If $\phi$ is
entangled, then $P_*\phi$ may not be positive. This observation
leads to more investigation of operators from $M$ to $M\o M$. In
general, description of this kind of mappings was fully not
studied yet. Some positivity conditions were found in
\cite{MM1,Ma3}. In \cite{MAopen,M2015} it was considered trace
preserving mappings from $M_2(\bc)$ to $M_2(\bc)\otimes M_2(\bc)$,
and each such kind of mappings can be written as a sum of two
"linear" and "nonlinear" operators (see
\eqref{D3-0}-\eqref{D3-2}). In \cite{MA1} mappings of the form
\eqref{D3-1} have been studied. Namely, some sufficient conditions
for positivity  (resp. Kadison-Schwarz property) of the mentioned
mappings were found.

In the present paper we are going to describe or characterize
operators of the form \eqref{D3-2}. To do it, we first in Section
3 we provide a characterization of KS-operators form $M_2(\bc)$ to
$M_2(\bc)$ which improves the main result of \cite{MA}. In section
4, we give a sufficient condition for a class of bistochastic
mappings from $M_2(\bc)$ to $M_2(\bc)\otimes M_2(\bc)$ to be
KS-operator. Note that this class of operators are totally
different from the operators studied in \cite{MA1}. Such a
description allows us to find positive, but not Kadison-Schwarz
operators. Moreover, by means of that conditions one can construct
KS-operators, which are not completely positive. Note that some
parts of this section have been announced in \cite{MA2}. Moreover,
our results allow to produce higher dimensional examples of
positive, but completely positive maps. The proposed approach can
be extended to a more general setting rather that $M_2(\bc)$, and
will produce non trivial examples of positive mappings.

\section{Preliminaries}

In this section we recall some definitions and notations.

 Let
$M_n(\bc)$ be the algebra of $n\times n$ matrices over the complex
field $\bc$. Recall that a linear mapping $\Phi: M_n(\bc)\to
M_m(\bc)$ is called
\begin{enumerate}
\item[(i)] {\it positive} if  $\Phi(x)\geq0$ whenever $x\geq0$;

\item[(ii)] {\it unital} if   $\Phi(\id)=\id$;

\item[(iii)] {\it trace preserving} if $\t(\Phi(x))=\t(x)$, where
$\t$ is the normalized trace;

\item[(iv)] {\it bistochastic} if $\Phi$ is unital and trace
preserving;

 \item[(v)] {\it $n$-positive} if the mapping $\Phi_n: M_n(A)\to
M_n(B)$ defined by $\Phi_n(a_{ij})=(\Phi(a_{ij}))$ is positive.
Here $M_n(A)$ denotes the algebra of $n \times n$ matrices with
$A$-valued entries;

\item[(vi)] {\it completely positive} if it is $n$-positive for
all $n\in\bn$;

\item[(vii)] {\it Kadison-Schwarz operator (KS-operator)}, if one
has
\begin{eqnarray}\label{ks2}
\Phi(x)^*\Phi(x)\leq \Phi(x^*x) \ \ \textrm{for all} \ \ x\in A.
\end{eqnarray}
\end{enumerate}

It is clear that any KS-operator is positive. Note that every
unital 2- positive map is KS-operator, and a famous result of
Kadison states that any positive unital map satisfies the
inequality \eqref{ks2} for all self-adjoint elements $x\in A$.

By $\mathcal{KS}(M_n,M_m)$ we denote the set of all KS-operators
mapping from $M_n(\bc)$ to $M_m(\bc)$.

\begin{thm}\cite{MA}\label{ks-s} The following assertions hold true:
\begin{enumerate}
\item[(i)] Let $\Phi,\Psi\in \mathcal{KS}(M_n,M_m)$, then for any
$\l\in[0,1]$ the mapping $\Gamma=\l \Phi+(1-\l)\Psi$ belongs to
$\mathcal{KS}(M_n,M_m)$. This means $\mathcal{KS}(M_n,M_m)$ is
convex; \item[(ii)] Let $U,V$ be unitaries in $M_n(\bc)$ and
$M_m(\bc)$, respectively, then for any $\Phi\in
\mathcal{KS}(M_n,M_m)$ the mapping $\Psi_{U,V}(x)=U\Phi(VxV^*)U^*$
belongs to $\mathcal{KS}(M_n,M_m)$.
\end{enumerate}
\end{thm}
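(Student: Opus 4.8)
The plan is to verify each of the two closure properties directly from the defining inequality \eqref{ks2}, exploiting the fact that both conditions concern a single element $x$ at a time (so no completely-positive machinery is needed). For part (i), fix $x\in M_n(\bc)$ and write $a=\Phi(x)$, $b=\Psi(x)$, $A=\Phi(x^*x)$, $B=\Psi(x^*x)$, so that by hypothesis $a^*a\le A$ and $b^*b\le B$. The element $\Gamma(x)=\l a+(1-\l)b$ must be shown to satisfy $\Gamma(x)^*\Gamma(x)\le \l A+(1-\l)B=\Gamma(x^*x)$. Expanding the left side gives $\l^2 a^*a+\l(1-\l)(a^*b+b^*a)+(1-\l)^2b^*b$, so it suffices to show
\begin{eqnarray*}
\l(1-\l)(a^*b+b^*a)\le \l(1-\l)(a^*a+b^*b),
\end{eqnarray*}
which is exactly the operator inequality $0\le(a-b)^*(a-b)$ after cancelling the nonnegative scalar $\l(1-\l)$ and combining with the remaining $\l(1-\l^2)$-type terms; a short bookkeeping check that $\l^2\le\l$ and $(1-\l)^2\le(1-\l)$ for $\l\in[0,1]$ then closes the estimate. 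The only mild subtlety is to organize the cross terms so that the convexity of $t\mapsto t^2$ is used in operator form; this is the standard fact that the map $x\mapsto x^*x$ on a C$^*$-algebra is operator convex, and I would either cite it or re-derive it in the two lines above.

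For part (ii), the argument is purely algebraic. Let $\Phi\in\mathcal{KS}(M_n,M_m)$ and set $\Psi(x)=U\Phi(VxV^*)U^*$ with $U,V$ unitary. Then, using $U^*U=\id$,
\begin{eqnarray*}
\Psi(x)^*\Psi(x)=U\Phi(VxV^*)^*U^*U\Phi(VxV^*)U^*=U\bigl(\Phi(VxV^*)^*\Phi(VxV^*)\bigr)U^*.
\end{eqnarray*}
Applying the KS inequality to the element $y=VxV^*$ gives $\Phi(y)^*\Phi(y)\le\Phi(y^*y)$, and since conjugation by the unitary $U$ preserves the order $\le$, we get $\Psi(x)^*\Psi(x)\le U\Phi(y^*y)U^*$. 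Finally $y^*y=Vx^*V^*VxV^*=Vx^*xV^*$, so $U\Phi(y^*y)U^*=U\Phi(V(x^*x)V^*)U^*=\Psi(x^*x)$, which is the desired inequality.

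I do not expect a genuine obstacle here: both parts reduce to elementary manipulations with positive operators. If anything, the one point requiring a little care is the operator-convexity step in (i) — one must resist the temptation to treat $a^*a$, $b^*b$ as commuting scalars — but this is handled cleanly by the identity $(\l a+(1-\l)b)^*(\l a+(1-\l)b)=\l(a^*a)+(1-\l)(b^*b)-\l(1-\l)(a-b)^*(a-b)$, which makes the inequality $\Gamma(x)^*\Gamma(x)\le\l\Phi(x^*x)+(1-\l)\Psi(x^*x)$ immediate once $a^*a\le\Phi(x^*x)$ and $b^*b\le\Psi(x^*x)$ are invoked and the last term is dropped for being $\le 0$. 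Convexity of the set then follows since $\mathcal{KS}(M_n,M_m)$ is trivially closed under the remaining linear structure of the statement.
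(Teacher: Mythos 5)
Your proof is correct, and since the paper only quotes this theorem from \cite{MA} without reproducing a proof, there is no in-paper argument to diverge from: your route --- the identity $(\lambda a+(1-\lambda)b)^*(\lambda a+(1-\lambda)b)=\lambda a^*a+(1-\lambda)b^*b-\lambda(1-\lambda)(a-b)^*(a-b)$ (operator convexity of $x\mapsto x^*x$) for part (i), and order-preservation under unitary conjugation applied to $y=VxV^*$ together with $y^*y=Vx^*xV^*$ for part (ii) --- is the standard argument used for this result. One cosmetic remark: the preliminary reorganization of cross terms in (i) (the ``$\lambda^2\le\lambda$ bookkeeping'' and the ``$\lambda(1-\lambda^2)$-type terms'') is redundant and slightly muddled as written; the displayed identity at the end already gives $\Gamma(x)^*\Gamma(x)\le\lambda\Phi(x^*x)+(1-\lambda)\Psi(x^*x)$ in one step after dropping the nonpositive term, so you should lead with it.
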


 By $M_2(\bc)\o M_2(\bc)$ we
mean tensor product of $M_2(\bc)$ into itself. We note that such a
product can be considered as an algebra of $4\times 4$ matrices
$M_4(\bc)$ over $\bc$. By $S(M_2(\bc))$ we denote the set of all
states (i.e. linear positive functionals which take value 1 at
$\id$) defined on $M_2(\bc)$.

Recall that a linear operator $\D: M_2(\bc)\to M_2(\bc)\o
M_2(\bc)$ is said to be  {\it quantum quadratic operator (q.q.o.)}
if it is unital and positive.

A state  $h\in S(M_2(\bc))$ is called {\it a Haar state} for a
q.q.o. $\D$ if for every $x\in M_2(\bc)$ one has
\begin{equation}\label{Haar}
(h\o id)\circ \D(x)=(id\o h)\circ\D(x)=h(x)\id.
\end{equation}

\begin{rem} Let $U:M_2(\bc)\o M_2(\bc)\to M_2(\bc)\o M_2(\bc)$
be a linear operator such that $U(x\o y)=y\o x$ for all $x,y\in
M_2(\bc)$. If a q.q.o. $\D$ satisfies $U\D=\D$, then $\D$ is
called a {\it quantum quadratic stochastic operator} or {\it
symmetric q.q.o}. Recent reviews on this kind of operators can be
found in \cite{GMR,GMb}).
\end{rem}

Recall \cite{BR} that the identity and Pauli matrices $\{ \id,
\sigma_1, \sigma_2, \sigma_3 \}$ form a basis for $M_2(\bc)$,
where
\begin{eqnarray*}
\sigma_1 = \left( \begin{array}{cc} 0 & 1 \\ 1 & 0 \end{array}
\right)~~ \sigma_2 = \left( \begin{array}{cc} 0 & -i \\ i & 0
\end{array} \right)~~ \sigma_3 = \left( \begin{array}{cc} 1 & 0 \\
0 & -1 \end{array} \right).
\end{eqnarray*}

In this basis every matrix $x\in M_2(\bc)$ can  be written as $x =
w_0\id + \wb{\bf \sigma}$ with $w_0\in\bc$, $\wb =(w_1,w_2,w_3)\in
\bc^3$, here $\wb\s=w_1\s_1+w_2\s_2+w_3\s_3$.

\begin{lem}\label{m2}\cite{RSW} The following assertions hold true:
\begin{enumerate}
\item[(a)] $x$ is self-adjoint iff  $w_0,\wb$  are reals;
\item[(b)] $\tr(x) = 1$ iff $w_0 =0.5$, here $\tr$ is the trace of
a matrix $x$; \item[(c)] $x
> 0$ iff $\|\wb\|\leq w_0$, where
$\|\wb\|=\sqrt{|w_1|^2+|w_2|^2+|w_3|^2}$.
\end{enumerate}
\end{lem}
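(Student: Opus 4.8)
The plan is to reduce all three assertions to the explicit $2\times 2$ form of $x$ and then apply elementary linear algebra. Writing out $x = w_0\id + w_1\sigma_1 + w_2\sigma_2 + w_3\sigma_3$ gives
\[
x=\left(\begin{array}{cc} w_0+w_3 & w_1-iw_2\\ w_1+iw_2 & w_0-w_3\end{array}\right),
\]
and throughout I would use the facts recalled above: $\{\id,\sigma_1,\sigma_2,\sigma_3\}$ is a basis of $M_2(\bc)$ consisting of self-adjoint matrices, with $\tr(\id)=2$ and $\tr(\sigma_j)=0$ for $j=1,2,3$.

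For (a), I would conjugate term by term: since each basis matrix is self-adjoint, $x^*=\bar w_0\id+\bar w_1\sigma_1+\bar w_2\sigma_2+\bar w_3\sigma_3$, and linear independence of the basis makes $x=x^*$ equivalent to $w_j=\bar w_j$ for $j=0,1,2,3$, i.e. $w_0,\wb$ real. For (b), linearity of the trace gives $\tr(x)=2w_0$, so $\tr(x)=1$ iff $w_0=1/2$.

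For (c), the first step is to observe that the relation $x>0$ presupposes $x=x^*$, so by (a) we may assume $w_0$ and $\wb$ are real; then $\|\wb\|\geq 0$ is a real number and the matrix above is Hermitian. For a Hermitian $2\times 2$ matrix, positivity amounts to nonnegativity of both eigenvalues; here $\tr(x)=2w_0$ and $\det(x)=(w_0+w_3)(w_0-w_3)-(w_1-iw_2)(w_1+iw_2)=w_0^2-\|\wb\|^2$, so the eigenvalues are the roots $w_0\pm\|\wb\|$ of $\lambda^2-2w_0\lambda+(w_0^2-\|\wb\|^2)$. Hence $x\geq 0$ iff $w_0-\|\wb\|\geq 0$, i.e. iff $\|\wb\|\leq w_0$; this inequality already forces $w_0\geq 0$, so the condition on the larger eigenvalue $w_0+\|\wb\|$ is automatic.

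Since every part is a direct computation, I do not expect a genuine obstacle; the only points deserving care are the logical ordering in (c) — self-adjointness must be secured before one may speak of eigenvalues or write $\|\wb\|\leq w_0$ — and fixing the convention that ``$x>0$'' for an a priori non-Hermitian matrix is to be read as positive semidefiniteness of a self-adjoint element, which is consistent with the non-strict inequality in the conclusion.
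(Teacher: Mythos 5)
Your proof is correct, and it is the standard elementary verification: the paper itself gives no proof of this lemma, simply quoting it from \cite{RSW}, and your computation (self-adjointness via the real coefficients in the Pauli basis, $\tr(x)=2w_0$, and eigenvalues $w_0\pm\|\wb\|$ from the trace and determinant of the explicit $2\times2$ matrix) is exactly the argument behind the cited statement. Your remark on part (c) — that positivity presupposes self-adjointness and that ``$x>0$'' must be read as positive semidefiniteness to match the non-strict inequality $\|\wb\|\leq w_0$ — is a sensible clarification and is consistent with how the paper uses the lemma later (e.g.\ in Lemma \ref{L1}).
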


Note that any state $\f\in S(M_2(\bc))$ can be represented by
\begin{equation}\label{state}
{\f}(w_0\id + \wb\sigma)=w_0+\langle\wb,{\mathbf{f}}\rangle, \ \
\end{equation}
where ${\mathbf{f}}=(f_1,f_2,f_3)\in\br^3$ with
$\|{\mathbf{f}}\|\leq 1$. Here as before
$\langle\cdot,\cdot\rangle$ stands for the scalar product in
$\bc^3$. Therefore, in the sequel we will identify a state
$\varphi$ with a vector $\fb\in \br^3$.

In what follows by $\t$ we denote a normalized trace, i.e.
$\t(x)=\frac{1}{2}\tr(x)$, $x\in M_2(\bc)$,

Let  $\Delta:M_2(\bc)\rightarrow M_2(\bc) \otimes M_2(\bc)$ be a
q.q.o.  We write the operator $\Delta $ in terms of a basis in
$\bm_2(\bc)\o\bm_2(\bc)$ formed by the Pauli matrices. Namely,
\begin{eqnarray}\label{ddd0}
&& \Delta \id=\id\otimes \id; \\\label{ddd1} && \Delta
(\sigma_i)=b_i(\id\otimes \id)+\overset{3}{\underset{j=1}{\sum
    }}b_{ij}^{(1)}(\id\otimes \sigma_j)+\overset{3}{\underset{j=1}{\sum
    }}b_{ij}^{(2)}(\sigma_j \otimes \id)+\overset{3}{\underset{m,l=1}{\sum
    }}b_{ml,i}(\sigma_m \otimes \sigma_l),
\end{eqnarray}
where $i=1,2,3$.

In general, a description of positive operators is one of the main
problems of quantum information. In the literature most tractable
maps are positive and trace-preserving ones, since such maps arise
naturally in quantum information theory (see \cite{KR,K,NC,RSW}).
Therefore, in the sequel we shall restrict ourselves to the trace
preserving q.q.o. Hence, from \eqref{ddd0},\eqref{ddd1} one finds

\begin{equation}\label{D3}
\D(x)=w_0\id\otimes\id+
\mathbf{B}^{(1)}\wb\cdot\s\otimes\id+\id\otimes\mathbf{B}^{(2)}\wb\cdot\s+
\sum_{m,l=1}^3\langle\bx_{ml},\overline{\wb}\rangle\sigma_m\otimes\sigma_l,
\end{equation}
where $x=w_0+\wb\s$, $\bx_{ml}=(b_{ml,1},b_{ml,2},b_{ml,3})$, and
$\mathbf{B}^{(k)}=(b_{ij}^{(k)})_{i,j=1}^3$, $k=1,2$.

In general, to find some conditions for $\D$ to be KS-operator, is
a tricky job. Therefore, one can rewrite \eqref{D3} as follows
\begin{equation}\label{D3-0}
\D(x)=\l\D_1(x)+(1-\l)\D_2(x),
\end{equation}
where
\begin{eqnarray}\label{D3-1}
&& \D_1(x)=w_0\id\otimes\id+
\frac{1}{\l}\sum_{m,l=1}^3\langle\bx_{ml},\overline{\wb}\rangle\sigma_m\otimes\sigma_l,
\\[2mm]
\label{D3-2} && \D_2(x)=w_0\id\otimes\id+
\frac{1}{1-\l}\bigg(\mathbf{B}^{(1)}\wb\cdot\s\otimes\id+\id\otimes\mathbf{B}^{(2)}\wb\cdot\s\bigg).
\end{eqnarray}

In \cite{MA1,MAA} we have studied q.q.o. of the form \eqref{D3-1}.
It is found necessary conditions for \eqref{D3-1} kind of
operators to be KS-operator. But operators of the form
\eqref{D3-2} has not been studied yet. Therefore, main aim of this
paper to find some conditions on operators \eqref{D3-2} to be
Kadison-Schwarz. Then using Theorem \ref{ks-s} and our findings
with the results of \cite{MAA}, we can find sufficient conditions
for \eqref{D3-0} to be KS-operator.

\section{Kadison-Schwarz operators from $M_2(\bc)$ to $M_2(\bc)$}

To investigate operators of the form \eqref{D3-2} (see section 4) we
need some preliminary facts from \cite{MA}. In this section we
collect some of them, and improve a main result of \cite{MA}.

It is known that every $\Phi: M_2(\mathbb{C})\rightarrow
M_2(\mathbb{C})$ linear mapping can also be represented in this
basis by a unique $4\times4$ matrix $\textbf{F}$. It is trace
preserving if and only if $\textbf{F}=\left(
                                                                                    \begin{array}{cc}
                                                                                      1 & \mathbf{0} \\
                                                                                      \mathbf{t} & T \\
                                                                                    \end{array}
                                                                                  \right)$
where T is a $3\times3$ matrix and $\mathbf{0}$ and $\mathbf{t}$
are row and column vectors, respectively, so that
\begin{equation}\label{ks3}
\Phi(w_0\id+\wb\cdot\s)=w_0\id+(w_0\mathbf{t}+T\wb)\cdot\s.
\end{equation}

When $\Phi$ is also positive then it maps the subspace of
self-adjoint matrices of $M_2(\mathbb{C})$ into itself, which
implies that $T$ is real. A linear mapping $\Phi$ is unital  if
and only if $t=0$. So, in this case we have
\begin{equation}\label{ks4}
\Phi(w_0\id+\wb\cdot\s)=w_0\id+(T\wb)\cdot\s.
\end{equation}

Hence, any bistochastic mapping $\Phi:M_2(\mathbb{C})\rightarrow
M_2(\mathbb{C})$ has a form \eqref{ks4}. In \cite{MA} it has been
given a characterization bistochastic KS-operators, i.e. the
following

\begin{thm}\cite{MA}\label{ks-d}
Any bistochastic mapping $\Phi: {M}_2(\mathbb{C})\rightarrow
M_2(\mathbb{C})$ is  KS-operator if and only if one has
\begin{eqnarray}\label{ks5}
&&\|T\wb\|\leq\|\wb\|, \ \ T\overline{\wb}=\overline{T\wb} \\
\label{ks6}
&&\bigg\|T[\wb,\overline{\wb}]-\big[T\wb,\overline{T\wb}\big]\bigg\|\leq\|\wb\|^2-\|T\wb\|^2
\end{eqnarray}
for all $\wb\in \bc^3$.
\end{thm}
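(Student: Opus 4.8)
The plan is to reduce the Kadison–Schwarz inequality $\Phi(x)^*\Phi(x)\le\Phi(x^*x)$ to concrete inequalities on the $3\times 3$ real matrix $T$ by exploiting the Pauli basis and the criterion for positivity of a $2\times 2$ matrix given in Lemma \ref{m2}(c). Write $x=w_0\id+\wb\cdot\s$, so that by \eqref{ks4} we have $\Phi(x)=w_0\id+(T\wb)\cdot\s$. The first step is to compute $\Phi(x)^*\Phi(x)$. Using $x^*=\bar w_0\id+\overline{\wb}\cdot\s$ and the multiplication rule $(\ab\cdot\s)(\bx\cdot\s)=\langle\ab,\overline{\bx}\rangle\id+i[\ab,\bx]\cdot\s$ for the Pauli matrices (where $[\ab,\bx]$ denotes the vector cross product), one finds
\begin{equation*}
\Phi(x)^*\Phi(x)=\big(|w_0|^2+\|T\wb\|^2\big)\id+\big(\bar w_0 T\wb+w_0\overline{T\wb}+i[\overline{T\wb},T\wb]\big)\cdot\s .
\end{equation*}
Next I would compute $\Phi(x^*x)$. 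Since $x^*x=\big(|w_0|^2+\|\wb\|^2\big)\id+\big(\bar w_0\wb+w_0\overline{\wb}+i[\overline{\wb},\wb]\big)\cdot\s$, applying \eqref{ks4} and using linearity (and that $T$ is real, so $T\overline{\mathbf v}=\overline{T\mathbf v}$ must hold — see below) gives
\begin{equation*}
\Phi(x^*x)=\big(|w_0|^2+\|\wb\|^2\big)\id+\big(\bar w_0 T\wb+w_0\overline{T\wb}+iT[\overline{\wb},\wb]\big)\cdot\s .
\end{equation*}

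Now the difference $\Phi(x^*x)-\Phi(x)^*\Phi(x)$ is $\big(\|\wb\|^2-\|T\wb\|^2\big)\id+\big(iT[\overline{\wb},\wb]-i[\overline{T\wb},T\wb]\big)\cdot\s$, which by Lemma \ref{m2}(c) is positive semidefinite if and only if its scalar coefficient is real and nonnegative and dominates the norm of its vector coefficient. Reality of the scalar part is automatic; positivity forces $\|T\wb\|\le\|\wb\|$, the first inequality in \eqref{ks5}. The size condition becomes precisely $\big\|T[\overline{\wb},\wb]-[\overline{T\wb},T\wb]\big\|\le\|\wb\|^2-\|T\wb\|^2$; rewriting $[\overline{\wb},\wb]=-[\wb,\overline{\wb}]$ and noting the norm is unchanged under sign gives \eqref{ks6}. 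The remaining point is that one also needs the vector coefficient to be a legitimate (i.e.\ the image lives in $M_2(\bc)$ with the correct structure) — more precisely, for $\Phi(x)$ itself to be handled consistently we must have $T\overline{\wb}=\overline{T\wb}$; this is the condition that $\Phi$ commutes with the adjoint operation in the right way, equivalently that $T$ maps real vectors to real vectors, and since a positive map sends self-adjoints to self-adjoints this is forced, yielding the second identity in \eqref{ks5}. For the converse, one simply runs the computation backwards: given \eqref{ks5}–\eqref{ks6}, the displayed difference has real nonnegative scalar part majorizing the norm of the vector part, so Lemma \ref{m2}(c) gives $\Phi(x^*x)\ge\Phi(x)^*\Phi(x)$ for every $x$.

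The main obstacle I anticipate is purely computational bookkeeping: correctly tracking the complex conjugates and the antisymmetry of the cross product through the products $\Phi(x)^*\Phi(x)$ and $x^*x$, since $\wb$ is allowed to be complex and the two factors do not commute. In particular one must be careful that the cross-product terms $i[\overline{T\wb},T\wb]$ and $iT[\overline{\wb},\wb]$ are genuinely the vector parts that survive (the terms $\bar w_0 T\wb+w_0\overline{T\wb}$ cancel identically in the difference), and that applying $T$ to the vector part of $x^*x$ is justified — this uses linearity of $\Phi$ together with $T\overline{\wb}=\overline{T\wb}$ to pull $T$ through the conjugation. Once these identities are in hand, the equivalence with Lemma \ref{m2}(c) is immediate, and the improvement over \cite{MA} (if any — e.g.\ dropping a superfluous hypothesis or sharpening an inequality) should fall out of noticing that no further constraint beyond \eqref{ks5}–\eqref{ks6} is needed.
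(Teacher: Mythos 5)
Your proposal is correct and takes essentially the same route as the source: the paper itself only quotes Theorem \ref{ks-d} from \cite{MA} without reproving it, but your computation (Pauli multiplication, the formula for $x^*x$, and positivity read off via Lemma \ref{m2}(c)) is exactly the argument the paper runs for the tensor-product analogue in Theorem \ref{T(x)4a}. The one point to state in the clean logical order in the ``only if'' direction is that a KS-operator is positive (as noted in Section 2), hence $T$ is real, i.e.\ $T\overline{\wb}=\overline{T\wb}$; only after that do the $w_0$-terms cancel and the vector part of $\Phi(x^*x)-\Phi(x)^*\Phi(x)$ become a real vector, which is what legitimizes applying Lemma \ref{m2}(c).
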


Let $\Phi$ be a bistochastic KS-operator on $M_2(\bc)$, then it
can be represented by \eqref{ks4}. Following \cite{KR} let us
decompose the matrix $T$ as follows $T=RS$, here $R$ is a rotation
and $S$ is a self-adjoint matrix (see \cite{KR}). Define a mapping
$\Phi_S$ as follows
\begin{equation}\label{F-s}
\Phi_S(w_0\id+\wb\cdot\s)=w_0\id+(S\wb)\cdot\s.
\end{equation}
Every rotation is implemented by a unitary matrix in $M_2(\bc)$,
therefore there is a unitary $U\in M_2(\bc)$ such that
\begin{equation}\label{F-sU}
\Phi(x)=U\Phi_S(x)U^*, \ \ \ x\in M_2(\bc).
\end{equation}

On the other hand, every self-adjoint operator $S$ can be
diagonalized by some unitary operator, i.e. there is a unitary
$V\in M_2(\bc)$ such that $S=VD_{\l_1,\l_2,\l_3}V^*$, where
\begin{eqnarray}\label{ks-D}
D_{\l_1,\l_2,\l_3}=\left(
    \begin{array}{ccc}
      \l_1 & 0 & 0 \\
      0 & \l_2 & 0 \\
      0 & 0 & \l_3 \\
    \end{array}
  \right),
\end{eqnarray}
where $\l_1,\l_2,\l_3\in\br$.

Consequently, the mapping $\Phi$ can be represented by
\begin{equation}\label{F-DU}
\Phi(x)=\tilde U\Phi_{D_{\l_1,\l_2,\l_3}}(x)\tilde U^*, \ \ \ x\in
M_2(\bc)
\end{equation}
for some unitary $\tilde U$. Due to Theorem \ref{ks-s} the mapping
$\Phi_{D_{\l_1,\l_2,\l_3}}$ is also KS-operator. Hence, all
bistochastic KS-operators can be characterized by
$\Phi_{D_{\l_1,\l_2,\l_3}}$ and unitaries. In what follows, for
the sake of shortness by $\Phi_{(\lambda_1,\lambda_2,\lambda_3)}$
we denote the mapping $\Phi_{D_{\l_1,\l_2,\l_3}}$. It is clear to
observe from \eqref{ks5} that  $|\lambda_k|\leq1, k=1,2,3$.\\

In \cite{RSW} it has been given a characterization of completely
positivity of $\Phi_{(\lambda_1,\lambda_2,\lambda_3)}$.

Using Theorem \ref{ks-s} we are going to characterize KS-operators
of the form $\Phi_{(\lambda_1,\lambda_2,\lambda_3)}$.

\begin{thm}\label{KS-m}
$\Phi_{(\lambda_1,\lambda_2,\lambda_3)}$ is a KS-operator if and
only if the following inequalities are satisfied:
\begin{eqnarray}\label{ksiff1}
&&(1+\l_1^2)(3+\l_2^2+\l_3^2-\l_1^2)\leq 4(1+\l_1\l_2\l_3);\\
\label{ksiff2} &&(1+\l_2^2)(3+\l_1^2+\l_3^2-\l_2^2)\leq
4(1+\l_1\l_2\l_3);\\ \label{ksiff3}
&&(1+\l_3^2)(3+\l_1^2+\l_2^2-\l_3^2)\leq 4(1+\l_1\l_2\l_3).
\end{eqnarray}
where $\l_1, \l_2, \l_3\in[-1,1]$.
\end{thm}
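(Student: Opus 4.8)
The plan is to apply Theorem~\ref{ks-d} to the specific mapping $\Phi=\Phi_{(\l_1,\l_2,\l_3)}$, whose associated matrix is the real diagonal matrix $T=D_{\l_1,\l_2,\l_3}$, and to unwind the two conditions \eqref{ks5}--\eqref{ks6} into the stated system \eqref{ksiff1}--\eqref{ksiff3}. Since $T$ is real, the condition $T\overline{\wb}=\overline{T\wb}$ in \eqref{ks5} is automatic, and $\|T\wb\|\le\|\wb\|$ reduces to $\sum_k\l_k^2|w_k|^2\le\sum_k|w_k|^2$, which (since we already know $|\l_k|\le1$) imposes nothing new. So the whole content sits in the commutator inequality \eqref{ks6}, and the proof is a matter of computing both sides explicitly for $\wb=(w_1,w_2,w_3)\in\bc^3$ and finding the exact condition under which the inequality holds for all such $\wb$.

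First I would compute the cross products. Writing $\wb=\ab+i\cb$ with $\ab,\cb\in\br^3$ real and imaginary parts, one has $[\wb,\overline{\wb}]=\wb\times\overline{\wb}=-2i(\ab\times\cb)$, a purely imaginary vector. Then $T[\wb,\overline{\wb}]=-2i\,D_{\l_1,\l_2,\l_3}(\ab\times\cb)$, while $[T\wb,\overline{T\wb}]=(T\ab+iT\cb)\times(T\ab-iT\cb)=-2i\,(T\ab)\times(T\cb)$. Hence \eqref{ks6} becomes
\begin{equation}\label{plan1}
2\big\|D_{\l_1,\l_2,\l_3}(\ab\times\cb)-(D_{\l_1,\l_2,\l_3}\ab)\times(D_{\l_1,\l_2,\l_3}\cb)\big\|\le \|\ab\|^2+\|\cb\|^2-\|D_{\l_1,\l_2,\l_3}\ab\|^2-\|D_{\l_1,\l_2,\l_3}\cb\|^2.
\end{equation}
A direct coordinate computation shows that the $k$-th component of $(D\ab)\times(D\cb)$ is $\l_i\l_j(a_ic_j-a_jc_k)$ with $\{i,j,k\}=\{1,2,3\}$, so the $k$-th component of the vector inside the norm on the left is $(\l_k-\l_i\l_j)(\ab\times\cb)_k$. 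Thus the left side is $2\big(\sum_k(\l_k-\l_i\l_j)^2(\ab\times\cb)_k^2\big)^{1/2}$ and the right side is $\sum_k(1-\l_k^2)(a_k^2+c_k^2)$.

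The main work, and the main obstacle, is the optimization: one must show that \eqref{plan1} holds for all $\ab,\cb$ if and only if the three inequalities \eqref{ksiff1}--\eqref{ksiff3} hold. For the "only if" direction I would plug in well-chosen test vectors — e.g. $\ab,\cb$ supported so that $\ab\times\cb$ points along a single coordinate axis $e_k$ and $\|\ab\|=\|\cb\|$ with $\ab\perp\cb$ (say $\ab=e_i$, $\cb=e_j$), which makes the left side $2|\l_k-\l_i\l_j|$ and the right side $(1-\l_i^2)+(1-\l_j^2)=2-\l_i^2-\l_j^2$; squaring and rearranging $|\l_k-\l_i\l_j|\le 1-\tfrac12(\l_i^2+\l_j^2)$ should produce exactly the inequality indexed by $k$ after using $3+\l_i^2+\l_j^2-\l_k^2$ bookkeeping. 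For the "if" direction I would reduce the general inequality to this extremal case: fixing the direction of $\ab\times\cb$ and using homogeneity, the worst case occurs when the "wasted" contributions to $\|\ab\|^2+\|\cb\|^2$ (components of $\ab,\cb$ along $\ab\times\cb$ itself, which only help the right side) are zero and $\ab,\cb$ are orthogonal of equal length in the plane perpendicular to $\ab\times\cb$; a Cauchy--Schwarz / Lagrange-multiplier argument on the quadratic forms should confirm that the single-axis configurations are the binding ones, so that the three conditions \eqref{ksiff1}--\eqref{ksiff3} are jointly sufficient. I expect the delicate point to be verifying rigorously that no "mixed-axis" choice of $\ab\times\cb$ (a general unit vector rather than a coordinate vector) can violate \eqref{plan1} once the three coordinate cases hold — this requires showing the relevant function of the direction of $\ab\times\cb$ attains its extrema at the axes, for which convexity/concavity of the squared-norm expression as a function of the (nonnegative) squared components of $\ab\times\cb$, together with the constraint linking them to $\|\ab\|^2+\|\cb\|^2$, is the natural tool.
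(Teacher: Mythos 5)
Your reduction of the Kadison--Schwarz condition \eqref{ks5}--\eqref{ks6} to the real inequality $2\big(\sum_k(\lambda_k-\lambda_i\lambda_j)^2u_k^2\big)^{1/2}\leq\sum_k(1-\lambda_k^2)(a_k^2+c_k^2)$, $u=\mathbf{a}\times\mathbf{c}$, is correct and coincides with the paper's \eqref{34-1}. The gap is in the ``only if'' step: the test vectors $\mathbf{a}=e_i$, $\mathbf{c}=e_j$ do \emph{not} produce the inequality indexed by $k$. They give $2|\lambda_k-\lambda_i\lambda_j|\leq(1-\lambda_i^2)+(1-\lambda_j^2)$, a constraint on the single quantity $\lambda_k-\lambda_i\lambda_j$, whereas \eqref{ksiff3}, say, is equivalent (expand it) to $(\lambda_1-\lambda_2\lambda_3)^2+(\lambda_2-\lambda_1\lambda_3)^2\leq(1-\lambda_3^2)^2$ --- a genuinely different inequality. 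Concretely, at $\lambda_1=\lambda_2=\lambda_3=-\frac{1}{2}$ all three of your test inequalities hold (with equality, $\frac32\le\frac32$), while \eqref{ksiff1}--\eqref{ksiff3} fail ($\frac{65}{16}>\frac{7}{2}$); so your tests cannot yield the stated necessity. Even the sharp form of your axis tests, with weighted lengths $\mathbf{a}=(1-\lambda_i^2)^{-1/2}e_i$, $\mathbf{c}=(1-\lambda_j^2)^{-1/2}e_j$, gives only $(\lambda_k-\lambda_i\lambda_j)^2\leq(1-\lambda_i^2)(1-\lambda_j^2)$, which for every $k$ is the same single inequality $\lambda_1^2+\lambda_2^2+\lambda_3^2-2\lambda_1\lambda_2\lambda_3\leq1$, i.e.\ the paper's \eqref{ks20} --- still not \eqref{ksiff1}--\eqref{ksiff3}.

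The deeper problem is that the exact optimization you plan for the ``if'' direction, carried out rigorously, terminates at \eqref{ks20} and not at the three stated inequalities. For $|\lambda_k|<1$ rescale $x_k=(1-\lambda_k^2)^{1/2}a_k$, $y_k=(1-\lambda_k^2)^{1/2}c_k$: the right-hand side becomes $\|\mathbf{x}\|^2+\|\mathbf{y}\|^2$, while $u_k=\big((1-\lambda_i^2)(1-\lambda_j^2)\big)^{-1/2}v_k$ with $v=\mathbf{x}\times\mathbf{y}$; since $v$ ranges over \emph{all} vectors of norm at most $\frac{1}{2}(\|\mathbf{x}\|^2+\|\mathbf{y}\|^2)$ in every direction, the supremum of the left-hand side is controlled by $\max_k(\lambda_k-\lambda_i\lambda_j)^2/\big((1-\lambda_i^2)(1-\lambda_j^2)\big)$, and the Kadison--Schwarz property is equivalent to this maximum being $\leq1$, i.e.\ to \eqref{ks20} (together with $|\lambda_k|\leq1$). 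In particular $\Phi_{(-1/2,-1/2,-1/2)}$ is a KS operator --- one checks directly that $\Phi(x^*x)-\Phi(x)^*\Phi(x)=\frac34\|\mathbf{w}\|^2\,{\bf 1}+\frac32(\mathbf{a}\times\mathbf{c})\cdot\sigma\geq0$, and this agrees with the paper's own Subsection 4.2, where $\Phi_{(2\lambda,2\lambda,2\lambda)}$ is found to be KS exactly for $\lambda\in[-\frac14,\frac12]$ --- although it violates \eqref{ksiff1}. So no refinement of your test vectors can deliver the ``only if'' of the statement; your route, done correctly, proves a different (single-inequality) characterization. Note also how this differs from the paper's argument: there \eqref{ksiff1}--\eqref{ksiff3} arise only as coefficientwise sufficient conditions for the quadratic form \eqref{ks12} after the crude bound \eqref{IM1}, and the necessity claim rests on the unproved remark that \eqref{IM1} is simultaneously attainable with the required weights; your proposal at least aims at the true extremal configurations, but then it lands on \eqref{ks20} rather than on the statement as given.
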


\begin{proof}
  'only if' part. Using simple calculation from (3.4) of Theorem \ref{ks-d} with
$T=D_{\lambda_1,\lambda_2,\lambda_3}$ we obtain
\begin{eqnarray}\label{34-1}
A|w_2\overline{w}_3-w_3\overline{w}_2|^2&+&B|w_1\overline{w}_3-w_3\overline{w}_1|^2\nonumber \\[2mm]
&+&C|w_1\overline{w}_2-w_2\overline{w}_1|^2\leq \big(\a|w_1|^2+\b
|w_2|^2+\g |w_3|^2\big)^2,
\end{eqnarray}
where $\wb=(w_1,w_2,w_3)\in \bc^3$ and
\begin{eqnarray}\label{abc}
&&\alpha=|1-\lambda_1^2|,  \ \ \ \beta=|1-\lambda_2^2|, \ \ \ \gamma=|1-\lambda_3^2|\\
\label{abc1} &&A=|\lambda_1-\lambda_2\lambda_3|^2, \ \ \
B=|\lambda_2-\lambda_1\lambda_3|^2, \ \ \
C=|\lambda_3-\lambda_1\lambda_2|^2.
\end{eqnarray}

Due to the inequality $|2\Im(uv)|\leq |u|^2+|v|^2$, one has
\begin{eqnarray}\label{IM1}
|w_i\overline{w}_j-w_j\overline{w}_i|^2=|2\Im(w_iw_j)|^2\leq
|w_i|^4+2|w_i|^2|w_j|^2+|w_j|^4 \ \ (i\neq j)
\end{eqnarray}
Note that this inequality is reachable by appropriate choosing of
values $w_i$ and $w_j$.

Hence, we estimate LHS of \eqref{34-1} by
\begin{eqnarray*}
A(|w_2|^4+2|w_2|^2|w_3|^2+|w_3|^4)+B(|w_1|^4+2|w_1|^2|w_3|^2+|w_3|^4)+C(|w_1|^4+2|w_1|^2|w_2|^2+|w_2|^4)
\end{eqnarray*}
Consequently, from \eqref{34-1} we derive the following one
\begin{eqnarray}
&&|w_1|^4(\alpha^2-B-C)+|w_2|^4(\beta^2-A-C)+|w_3|^4(\gamma^2-A-B)\nonumber \\
\label{ks12}
&&+2|w_1|^2|w_2|^2(\alpha\beta-C)+2|w_1|^2|w_3|^2(\alpha\gamma-B)+2|w_2|^2|w_3|^2(\beta\gamma-A)\geq0
\end{eqnarray}
for all $(w_1,w_2,w_3)\in\bc^3$. It is easy to see that \eqref{ks12}
is satisfied if one has
\begin{eqnarray*}
&&\a^2\geq B+C, \quad  \b^2\geq A+C, \quad  \g^2\geq A+B,\\
&&\a\b\geq C,  \quad  \a\g\geq B,   \quad  \b\g\geq A.
\end{eqnarray*}

Substituting above denotations \eqref{abc},\eqref{abc1} to the last
inequalities, and doing simple
 calculation one derives

\begin{eqnarray}\label{ks17}
&&(1+\l_1^2)(3+\l_2^2+\l_3^2-\l_1^2)\leq 4(1+\l_1\l_2\l_3);\\
\label{ks18} &&(1+\l_2^2)(3+\l_1^2+\l_3^2-\l_2^2)\leq
4(1+\l_1\l_2\l_3);\\ \label{ks19}
&&(1+\l_3^2)(3+\l_1^2+\l_2^2-\l_3^2)\leq 4(1+\l_1\l_2\l_3);\\
\label{ks20} &&\l_1^2+\l_2^2+\l_3^2\leq 1+ 2\l_1\l_2\l_3.
\end{eqnarray}
where $\l_1, \l_2, \l_3\in[-1,1]$.

Now we would like to show that \eqref{ks20} is an extra condition,
i.e. the inequality \eqref{ks20} always satisfies when \eqref{ks17},
\eqref{ks18} and \eqref{ks19} are true. Suppose that
\begin{eqnarray}\label{kseq}
\l_1^2+\l_2^2+\l_3^2= 1+ 2\l_1\l_2\l_3
\end{eqnarray}
is true. We will show that the elements of the surface do not
satisfy the inequalities \eqref{ks17}, \eqref{ks18}  and
\eqref{ks19} except for $(0,0,0), \ (\pm1,\pm1,\pm1)$. Using simple
algebra from \eqref{ks17}, \eqref{ks18} and \eqref{ks19} with
\eqref{kseq} we obtain the followings
\begin{eqnarray*}
  &&(1-\l_1^2)(\l_1^2-\l_1\l_2\l_3)\leq0;\\
  &&(1-\l_2^2)(\l_2^2-\l_1\l_2\l_3)\leq0;\\
  &&(1-\l_3^2)(\l_3^2-\l_1\l_2\l_3)\leq0,
\end{eqnarray*}
where $\l_1, \l_2, \l_3\in[-1,1]$. Due to our assumption
$\l_1\neq\pm1, \l_2\neq\pm1, \l_3\neq\pm1$ from the last
inequalities we infer that
\begin{eqnarray}\label{ksin1}
&&\l_1(\l_1-\l_2\l_3)\leq0\\ \label{ksin2}
&&\l_2(\l_2-\l_1\l_3)\leq0\\ \label{ksin3}
&&\l_3(\l_3-\l_1\l_2)\leq0,
\end{eqnarray}
where $\l_1, \l_2, \l_3\in(-1,1)$. Let $\l_1>0$, then one gets
$\l_1\leq\l_2\l_3.$ It implies $\l_2>0, \l_3>0$ or $\l_2<0, \l_3<0$.
Now assume $\l_2>0, \l_3>0$, then from \eqref{ksin2} and
\eqref{ksin3} one gets
\begin{eqnarray*}
  \l_2\leq\l_1\l_3,\  \ \ \l_3\leq\l_1\l_2.
\end{eqnarray*}
From $\l_1\leq\l_2\l_3$ and $\l_2\leq\l_1\l_3$ one has
$\l_2\leq\l_2\l_3^2$. This means $1\leq\l_3^2$. This contradicts to
our assumption.

Now let $\l_1>0, \l_2<0$ and $\l_3<0$, then from \eqref{ksin2} and
\eqref{ksin3} one finds
\begin{eqnarray}\label{ksin4}
\l_2\geq\l_1\l_3, \ \ \ \ \l_3\geq\l_1\l_2.
\end{eqnarray}
From \eqref{ksin4} one finds $\l_3\geq\l_1^2\l_3$. This implies that
$\l_1^2\geq1$. It is again a contradiction. In case $\l_1<0$, using
the similar argument we will get again contradiction. This implies
the required assertion.

'if' part. Let \eqref{ksiff1}-\eqref{ksiff3} be satisfied. Then it
implies that \eqref{ks20} is always true. This means \eqref{ks12} is
satisfied. This yields \eqref{34-1}, hence
$\Phi_{(\lambda_1,\lambda_2,\lambda_3)}$ is a KS-operator. This
completes the proof.
\end{proof}

Note that the proved theorem provided necessary and sufficient
conditions for the mapping $\Phi_{(\lambda_1,\lambda_2,\lambda_3)}$
 to be KS-operator. In \cite{MA} it was proved only sufficient conditions
 to be KS-operators. Therefore, the last theorem essentially improves a main
  result of \cite{MA}. Moreover, the last theorem allows
us to construct lots of KS-operators, which are not completely
positive.

\section{A class of Kadison-Schwarz operators from $M_2(\bc)$ to $M_2(\bc)\otimes M_2(\bc)$}

In this section we are going to provide description of operators of
the form \eqref{D3-2}. First we need the following auxiliary

\begin{lem}\label{L1}
  Let
  $x=w_0\id\otimes\id+\wb\cdot\s\otimes\id+\id\otimes\rb\cdot\s$. Then
  the following statements hold true:
  \begin{enumerate}
    \item[(i)] $x$ is self-adjoint if and only if $w_0 \in
    \mathbb{R}$ and $\wb, \rb \in \mathbb{R}^3$;
    \item[(ii)] $x$ is positive if and only if $w_0>0$ and $\|\wb\|+\|\rb\|\leq w_0$.
  \end{enumerate}
\end{lem}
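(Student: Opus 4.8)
The plan is to identify the element $x = w_0\id\otimes\id + \wb\cdot\s\otimes\id + \id\otimes\rb\cdot\s$ with a concrete $4\times 4$ matrix and read off self-adjointness and positivity from that description. For part (i), self-adjointness is immediate: since $\{\id,\sigma_1,\sigma_2,\sigma_3\}$ is a basis of $M_2(\bc)$ whose elements are all self-adjoint, the family $\{\id\otimes\id,\ \sigma_i\otimes\id,\ \id\otimes\sigma_j\}$ consists of linearly independent self-adjoint elements of $M_2(\bc)\otimes M_2(\bc)$; hence $x=x^*$ forces the coefficients $w_0,\wb,\rb$ to be real, and conversely. This is just Lemma \ref{m2}(a) transported to the tensor square.

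For part (ii), the key observation is that $A:=\wb\cdot\s\otimes\id$ and $B:=\id\otimes\rb\cdot\s$ commute, and each is a self-adjoint element with $A^2=\|\wb\|^2(\id\otimes\id)$, $B^2=\|\rb\|^2(\id\otimes\id)$ (using $(\wb\cdot\s)^2=\|\wb\|^2\id$ for real $\wb$, from the Pauli relations). So I would first reduce to the self-adjoint case via part (i), then simultaneously diagonalize $A$ and $B$: the eigenvalues of $A$ are $\pm\|\wb\|$ (each with multiplicity $2$) and those of $B$ are $\pm\|\rb\|$ (each with multiplicity $2$), and since they commute there is an orthonormal basis in which $x$ is diagonal with entries $w_0 + \varepsilon_1\|\wb\| + \varepsilon_2\|\rb\|$ for the four sign choices $\varepsilon_1,\varepsilon_2\in\{+1,-1\}$. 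Positivity of $x$ is then equivalent to all four of these numbers being $\geq 0$, i.e. to $w_0 - \|\wb\| - \|\rb\| \geq 0$ (the smallest of them), which is the claimed inequality; and this also forces $w_0\geq \|\wb\|+\|\rb\|\geq 0$. For the strict statement $w_0>0$ in the lemma, note that $w_0=0$ would force $\wb=\rb=0$ and $x=0$, so under the nondegeneracy implicit in the statement one records $w_0>0$ (or one simply observes $w_0\geq\|\wb\|+\|\rb\|$ suffices, with equality $w_0=0$ giving $x=0\geq 0$ trivially).

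Alternatively, and perhaps more in the spirit of the paper's earlier computations, I would argue without diagonalization: for any unit vector $\xi\in\bc^2\otimes\bc^2$, write $\langle x\xi,\xi\rangle = w_0 + \langle (\wb\cdot\s\otimes\id)\xi,\xi\rangle + \langle(\id\otimes\rb\cdot\s)\xi,\xi\rangle$, bound the second term by $\|\wb\cdot\s\otimes\id\| = \|\wb\|$ and the third by $\|\rb\|$, to get $\langle x\xi,\xi\rangle \geq w_0-\|\wb\|-\|\rb\|$, which gives sufficiency; for necessity, choose $\xi = \eta\otimes\zeta$ with $\eta$ a unit eigenvector of $\wb\cdot\s$ for eigenvalue $-\|\wb\|$ and $\zeta$ a unit eigenvector of $\rb\cdot\s$ for eigenvalue $-\|\rb\|$, giving $\langle x\xi,\xi\rangle = w_0-\|\wb\|-\|\rb\|$, so this must be $\geq 0$.

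The only mildly delicate point — the "main obstacle," though it is routine — is justifying that the lower bound $w_0-\|\wb\|-\|\rb\|$ is actually attained, i.e. that one can choose a single product vector $\eta\otimes\zeta$ simultaneously extremal for both $\wb\cdot\s\otimes\id$ and $\id\otimes\rb\cdot\s$; this works precisely because the two operators act on different tensor legs and hence their extremal eigenvectors can be chosen independently. Once that is in hand, both implications of (ii) follow immediately, and the proof is complete.
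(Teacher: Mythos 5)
Your proposal is correct, and its core is the same spectral argument as the paper's: both reduce positivity to the smallest eigenvalue $w_0-\|\wb\|-\|\rb\|$ being nonnegative, after handling self-adjointness exactly as you do via linear independence of the self-adjoint basis elements. The difference is only in how the spectrum is obtained: the paper writes out the explicit $4\times4$ matrix of $x$ in the standard basis and states that its eigenvalues are $w_0\pm\|\wb\|\pm\|\rb\|$, whereas you derive the same list structurally, by observing that $\wb\cdot\s\otimes\id$ and $\id\otimes\rb\cdot\s$ commute (acting on different tensor legs) and taking product eigenvectors $\eta_{\varepsilon_1}\otimes\zeta_{\varepsilon_2}$; your variational alternative ($\langle x\xi,\xi\rangle\geq w_0-\|\wb\|-\|\rb\|$ with equality at a product vector) is a second route to the same inequality. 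Your version is slightly more conceptual and would generalize beyond $M_2(\bc)$, while the paper's is a concrete computation; both are complete. Your remark on the edge case is also apt: as stated, positivity is really equivalent to $w_0\geq\|\wb\|+\|\rb\|$ (with $w_0=0$ forcing $x=0$), and the paper's own proof likewise concludes only $w_0-\|\wb\|-\|\rb\|\geq0$ despite the ``$w_0>0$'' in the statement.
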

\begin{proof} (i). One can see that
    \begin{eqnarray*}
      x^* = \overline{w_0}
      \id\otimes\id+\overline{\wb}\cdot\s\otimes\id +
      \id\otimes\overline{\rb}\cdot\s
    \end{eqnarray*}
So, self adjointness $x$ implies $\overline{w_0}=w_0$,
    $\overline{\wb}=\wb$, $\overline{\rb}=\rb$.

(ii). Let $x$ be self-adjoint. Then from the definition of Pauli
matrices one finds
    \begin{eqnarray*}
      x=\left(
          \begin{array}{cccc}
            w_0+w_3+r_3 & w_1-iw_2 & r_1-ir_2 & 0 \\
            w_1+iw_2 & w_0-w_3+r_3 & 0 & r_1-ir_2 \\
            r_1+ir_2 & 0& w_0+w_3-r_3 & w_1-iw_2 \\
            0 & r_1+ir_2 & w_1+iw_2 & w_0-w_3-r_3 \\
          \end{array}
        \right)
    \end{eqnarray*}
It is easy to calculate that eigenvalues of last matrix are the
    followings
    \begin{eqnarray*}
&&      \l_1=w_0-\|\rb\|+\|\wb\|, \ \
      \l_2=w_0-\|\rb\|-\|\wb\|, \\[2mm]
  &&    \l_3=w_0+\|\rb\|+\|\wb\|, \ \
      \l_4=w_0+\|\rb\|-\|\wb\|
    \end{eqnarray*}

    So, we can conclude that $x$ is positive if and only if the smallest
    eigenvalue is positive. This means $w_0-\|\rb\|-\|\wb\|\geq0$,
    which completes the proof.
\end{proof}

Now we rewrite operator \eqref{D3-2} as $T:M_2(\bc)\to
M_2(\bc)\otimes M_2(\bc)$ given by
\begin{eqnarray}\label{T(x)}
  T(w_0\id+\wb\cdot\s)=w_0\id\otimes\id+\mathbf{A}\wb\cdot\s\otimes\id+\id\otimes\mathbf{C}\wb\cdot\s
\end{eqnarray}
where $\mathbf{A}, \mathbf{C}$ are linear operators on $\bc^3$.

We first find conditions when $T$ is positive. This is given by the
following

\begin{thm}
  The mapping $T$ given by $\eqref{T(x)}$ is positive if and only if
  \begin{eqnarray*}
    \|\mathbf{A}\wb\|+\|\mathbf{C}\wb\|\leq1,
  \end{eqnarray*}
for all $\wb\in\br^3$ with $\|\wb\|=1$.
\end{thm}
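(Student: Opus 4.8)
The plan is to reduce the positivity of $T$ to a statement about positivity of elements of the form handled by Lemma~\ref{L1}. By definition, $T$ is positive if and only if $T(x)\geq 0$ for every self-adjoint $x\in M_2(\bc)$ with $x\geq 0$. Writing $x=w_0\id+\wb\cdot\s$, Lemma~\ref{m2}(c) tells us that $x\geq 0$ is equivalent to $w_0\geq 0$ and $\|\wb\|\leq w_0$, and in that case $x$ is automatically self-adjoint, so $w_0\in\br$ and $\wb\in\br^3$. Applying $T$ gives $T(x)=w_0\,\id\otimes\id+\mathbf{A}\wb\cdot\s\otimes\id+\id\otimes\mathbf{C}\wb\cdot\s$, which has exactly the shape $w_0\id\otimes\id+\wb'\cdot\s\otimes\id+\id\otimes\rb'\cdot\s$ with $\wb'=\mathbf{A}\wb$ and $\rb'=\mathbf{C}\wb$. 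Note $\wb',\rb'\in\br^3$ since $\mathbf{A},\mathbf{C}$ preserve the real subspace (this follows because $T$ maps self-adjoint elements to self-adjoint elements, using Lemma~\ref{L1}(i)). Hence by Lemma~\ref{L1}(ii), $T(x)\geq 0$ iff $w_0\geq 0$ and $\|\mathbf{A}\wb\|+\|\mathbf{C}\wb\|\leq w_0$.

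Now I would put these together. For the ``if'' direction, assume $\|\mathbf{A}\wb\|+\|\mathbf{C}\wb\|\leq 1$ for all unit vectors $\wb\in\br^3$; by homogeneity this gives $\|\mathbf{A}\wb\|+\|\mathbf{C}\wb\|\leq\|\wb\|$ for all $\wb\in\br^3$. Then for any $x\geq 0$, write $x=w_0\id+\wb\cdot\s$ with $w_0\geq 0$ and $\|\wb\|\leq w_0$; we get $\|\mathbf{A}\wb\|+\|\mathbf{C}\wb\|\leq\|\wb\|\leq w_0$, so $T(x)\geq 0$ by the previous paragraph. For the ``only if'' direction, suppose $T$ is positive and fix a unit vector $\wb\in\br^3$. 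The element $x=\id+\wb\cdot\s$ satisfies $w_0=1$ and $\|\wb\|=1\leq w_0$, hence $x\geq 0$, so $T(x)\geq 0$, which by Lemma~\ref{L1}(ii) forces $\|\mathbf{A}\wb\|+\|\mathbf{C}\wb\|\leq 1$. Since $\wb$ was an arbitrary unit vector, the claimed inequality holds.

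The only genuinely nontrivial input is Lemma~\ref{L1}(ii), which is already proved in the excerpt via explicit diagonalization of the $4\times 4$ matrix, so the remaining argument is a routine homogeneity-and-substitution bookkeeping. The one point deserving a line of care is justifying that $\mathbf{A}\wb$ and $\mathbf{C}\wb$ are real when $\wb$ is real: this is needed to apply Lemma~\ref{L1}(ii), and it follows from the fact that a positive map sends self-adjoint elements to self-adjoint elements together with the uniqueness of the Pauli decomposition (comparing $T(x)^*$ with $T(x)$ for self-adjoint $x$). I expect no real obstacle here; the main thing is to state the reduction cleanly so that the equivalence ``$T\geq 0$ on all positive $x$'' $\Longleftrightarrow$ ``the norm inequality on the unit sphere'' is transparent.
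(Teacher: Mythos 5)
Your argument is correct and is essentially the paper's own proof: reduce to Lemma~\ref{L1}(ii) after normalizing $w_0=1$ (the paper compresses your homogeneity and ``only if'' bookkeeping into ``without loss of generality we may assume $w_0=1$''). The only difference is that you spell out the reality of $\mathbf{A}\wb,\mathbf{C}\wb$ and the unit-sphere substitution explicitly, which the paper leaves implicit.
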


\begin{proof}
  Let $x=w_0\id+\wb\cdot\s$ be positive, i.e. $w_0>0,$ $\|\wb\|\leq
  w_0$. Without lost of generality we may assume $w_0=1.$
  Now Lemma $\ref{L1}$ yields that $T(x)$ is positive if and only if
  $\|\mathbf{A}\wb\|+\|\mathbf{C}\wb\|\leq1$. This competes the
  proof.
\end{proof}

\begin{cor}
  Let $\mathbf{A}=\mathbf{C}$ then $T$ is positive if and
  only if $\|\mathbf{A}\|\leq\frac{1}{2}$.
\end{cor}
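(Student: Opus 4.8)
The plan is to specialize the preceding theorem to the case $\mathbf{A}=\mathbf{C}$, so that the positivity criterion becomes $2\|\mathbf{A}\wb\|\le 1$ for all unit vectors $\wb\in\br^3$. First I would observe that the function $\wb\mapsto\|\mathbf{A}\wb\|$ attains its maximum over the unit sphere $\{\wb\in\br^3:\|\wb\|=1\}$ at precisely the operator norm $\|\mathbf{A}\|$ (this is just the definition of the operator norm, and the supremum is attained because the sphere is compact and the map continuous). Hence the condition ``$\|\mathbf{A}\wb\|+\|\mathbf{C}\wb\|=2\|\mathbf{A}\wb\|\le 1$ for all $\wb$ with $\|\wb\|=1$'' is equivalent to $2\max_{\|\wb\|=1}\|\mathbf{A}\wb\|\le 1$, i.e. to $2\|\mathbf{A}\|\le 1$, which rearranges to $\|\mathbf{A}\|\le\frac12$.

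Concretely, the two implications go as follows. For the forward direction, assume $T$ is positive; then by the Theorem, $2\|\mathbf{A}\wb\|\le 1$ for every unit $\wb$, and taking the supremum over such $\wb$ gives $2\|\mathbf{A}\|\le 1$. For the reverse direction, assume $\|\mathbf{A}\|\le\frac12$; then for every unit vector $\wb$ we have $\|\mathbf{A}\wb\|\le\|\mathbf{A}\|\,\|\wb\|=\|\mathbf{A}\|\le\frac12$, so $\|\mathbf{A}\wb\|+\|\mathbf{C}\wb\|=2\|\mathbf{A}\wb\|\le 1$, and the Theorem yields that $T$ is positive.

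There is essentially no obstacle here — the corollary is an immediate consequence of the Theorem together with the elementary fact that the operator norm equals the maximum of $\|\mathbf{A}\wb\|$ on the unit sphere. If I wanted to be careful about one small point, it would be making sure the equivalence passes through the ``for all $\wb$'' quantifier correctly: the inequality $2\|\mathbf{A}\wb\|\le1$ holding for all unit $\wb$ is the same as the supremum being $\le\frac12$, which uses that the supremum is actually attained (so one is not merely bounding a strict supremum). I would state this briefly and then conclude.

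\begin{proof}
  By the previous theorem, with $\mathbf{A}=\mathbf{C}$, the mapping $T$ is positive if and only if $2\|\mathbf{A}\wb\|\le 1$ for all $\wb\in\br^3$ with $\|\wb\|=1$. Since the unit sphere of $\br^3$ is compact and $\wb\mapsto\|\mathbf{A}\wb\|$ is continuous, one has
  \begin{eqnarray*}
    \max_{\|\wb\|=1}\|\mathbf{A}\wb\|=\|\mathbf{A}\|.
  \end{eqnarray*}
  Hence the condition $2\|\mathbf{A}\wb\|\le 1$ for all unit $\wb$ is equivalent to $2\|\mathbf{A}\|\le 1$, i.e. to $\|\mathbf{A}\|\le\frac12$. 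Conversely, if $\|\mathbf{A}\|\le\frac12$ then for any unit $\wb$ one has $\|\mathbf{A}\wb\|\le\|\mathbf{A}\|\le\frac12$, so $\|\mathbf{A}\wb\|+\|\mathbf{C}\wb\|=2\|\mathbf{A}\wb\|\le 1$ and $T$ is positive by the theorem. This completes the proof.
\end{proof}
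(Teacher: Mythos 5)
Your proof is correct and is exactly the argument the paper intends: the corollary is an immediate specialization of the preceding theorem, with the only content being that $\sup_{\|\wb\|=1}\|\mathbf{A}\wb\|=\|\mathbf{A}\|$, which you state and use properly. The paper gives no separate proof, treating it as immediate, so your write-up simply makes that one-line reduction explicit.
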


Now let us turn to the Kadison-Schwarz property.

Define the following mappings
\begin{eqnarray}\label{T(x)1}
  \Phi(x)=w_0\id+2\mathbf{A}\wb\cdot\s
  \end{eqnarray}
\begin{eqnarray}\label{T(x)2}
  \Psi(x)=w_0\id+2\mathbf{C}\wb\cdot\s
\end{eqnarray}
Then one finds
\begin{eqnarray}\label{T(x)3}
  T(x)=\frac{1}{2}\bigg(\Phi(x)\otimes\id+ \id \otimes
  \Psi(x)\bigg).
\end{eqnarray}

\begin{thm}\label{T(x)4a}
  Let $T$ be a mapping given by $\eqref{T(x)3}$. If one has
  \begin{eqnarray}\label{T(x)5}
    \|\wb\|^2-2\|\mathbf{A}\wb\|^2-2\|\mathbf{C}\wb\|^2\geq0
  \end{eqnarray}
  \begin{eqnarray}\label{T(x)6}
    \|\mathbf{A}[\wb,\overline{\wb}]-2[\mathbf{A}\wb,\mathbf{A}\overline{\wb}]\|+\|\mathbf{C}[\wb,\overline{\wb}]-2[\mathbf{C}\wb,\mathbf{C}\overline{\wb}]\|\leq
    \|\wb\|^2-2\|\mathbf{A}\wb\|^2-2\|\mathbf{C}\wb\|^2
  \end{eqnarray}
  Then $T$ is a Kadison-Schwarz operator.
\end{thm}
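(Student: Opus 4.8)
The plan is to reduce the Kadison--Schwarz inequality $T(x)^*T(x)\le T(x^*x)$ to a positivity statement of the type settled by Lemma~\ref{L1}. Since $T$ is unital and, the matrices $\mathbf{A},\mathbf{C}$ being real, preserves adjoints, the difference $T(x^*x)-T(x)^*T(x)$ does not change when $x$ is replaced by $x+c\id$ (the cross terms cancel by unitality and hermiticity); hence it suffices to treat $w_0=0$, i.e. $x=\wb\cdot\s$ with $\wb\in\bc^3$. For such $x$ one has $\Phi(x)=2\mathbf{A}\wb\cdot\s$ and $\Psi(x)=2\mathbf{C}\wb\cdot\s$, and \eqref{T(x)3} gives
\[
 T(x)^*T(x)=\tfrac14\Big(\Phi(x)^*\Phi(x)\o\id+\id\o\Psi(x)^*\Psi(x)+\Phi(x)^*\o\Psi(x)+\Phi(x)\o\Psi(x)^*\Big),
\]
while $T(x^*x)=\tfrac12\big(\Phi(x^*x)\o\id+\id\o\Psi(x^*x)\big)$. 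The only summand outside the scope of Lemma~\ref{L1} is the mixed one $\Phi(x)^*\o\Psi(x)+\Phi(x)\o\Psi(x)^*$.

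The key observation is that this mixed term can be discarded at the cost of an inequality: from $0\le\big(\Phi(x)\o\id-\id\o\Psi(x)\big)^*\big(\Phi(x)\o\id-\id\o\Psi(x)\big)$ one reads off
\[
 \Phi(x)^*\o\Psi(x)+\Phi(x)\o\Psi(x)^*\ \le\ \Phi(x)^*\Phi(x)\o\id+\id\o\Psi(x)^*\Psi(x).
\]
Substituting this bound into the expressions for $T(x)^*T(x)$ and $T(x^*x)$ yields
\[
 T(x^*x)-T(x)^*T(x)\ \ge\ \tfrac12\Big(\big(\Phi(x^*x)-\Phi(x)^*\Phi(x)\big)\o\id+\id\o\big(\Psi(x^*x)-\Psi(x)^*\Psi(x)\big)\Big),
\]
so it remains only to prove positivity of the right-hand side.

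To do this I would compute the one-sided differences with the Pauli rule $(\ab\cdot\s)(\bx\cdot\s)=(\ab\cdot\bx)\id+i[\ab,\bx]\cdot\s$. Using $x^*x=\|\wb\|^2\id-i[\wb,\overline{\wb}]\cdot\s$ and $\overline{\mathbf{A}\wb}=\mathbf{A}\overline{\wb}$, a short calculation gives
\[
 \Phi(x^*x)-\Phi(x)^*\Phi(x)=\big(\|\wb\|^2-4\|\mathbf{A}\wb\|^2\big)\id-2i\big(\mathbf{A}[\wb,\overline{\wb}]-2[\mathbf{A}\wb,\mathbf{A}\overline{\wb}]\big)\cdot\s,
\]
and the same with $\mathbf{C}$ in place of $\mathbf{A}$. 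Writing $\wb=\pb+i\qb$ with $\pb,\qb\in\br^3$ one has $[\wb,\overline{\wb}]=-2i\,\pb\times\qb$ and $[\mathbf{A}\wb,\mathbf{A}\overline{\wb}]=-2i\,\mathbf{A}\pb\times\mathbf{A}\qb$, so the vector $-2i(\mathbf{A}[\wb,\overline{\wb}]-2[\mathbf{A}\wb,\mathbf{A}\overline{\wb}])$ is real; likewise with $\mathbf{C}$. Consequently the lower bound above is precisely of the shape $w_0'\,\id\o\id+\wb'\cdot\s\o\id+\id\o\rb'\cdot\s$ handled by Lemma~\ref{L1}, with $w_0'=\|\wb\|^2-2\|\mathbf{A}\wb\|^2-2\|\mathbf{C}\wb\|^2$, $\|\wb'\|=\|\mathbf{A}[\wb,\overline{\wb}]-2[\mathbf{A}\wb,\mathbf{A}\overline{\wb}]\|$ and $\|\rb'\|=\|\mathbf{C}[\wb,\overline{\wb}]-2[\mathbf{C}\wb,\mathbf{C}\overline{\wb}]\|$. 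By Lemma~\ref{L1}(ii) this operator is positive exactly when $w_0'\ge0$ and $\|\wb'\|+\|\rb'\|\le w_0'$, which are exactly \eqref{T(x)5} and \eqref{T(x)6}. Hence, if these hold for all $\wb\in\bc^3$, then $T(x^*x)-T(x)^*T(x)\ge0$ for every $x$, i.e. $T$ is Kadison--Schwarz.

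The routine parts are the two Pauli-algebra identities and the check that the relevant commutator vectors are real so that Lemma~\ref{L1} applies; the only genuine idea is the factorization $0\le(\Phi(x)\o\id-\id\o\Psi(x))^*(\Phi(x)\o\id-\id\o\Psi(x))$ that removes the $\sigma_m\o\sigma_l$ contribution. I expect the main place to slip is the bookkeeping of the factors $2$: they enter through $\Phi=w_0\id+2\mathbf{A}\wb\cdot\s$ together with the $\tfrac12$ in \eqref{T(x)3}, and it is exactly this that produces the coefficients in front of $\|\mathbf{A}\wb\|^2$ and $[\mathbf{A}\wb,\mathbf{A}\overline{\wb}]$ in \eqref{T(x)5} and \eqref{T(x)6}. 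Note also that the route gives only sufficiency --- passing to the displayed lower bound is lossy --- which matches the one-directional statement of the theorem.
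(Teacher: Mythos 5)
Your proposal is correct and follows essentially the same route as the paper: the paper's key step is the exact identity $T(x^*x)-T(x)^*T(x)=\tfrac12\big((\Phi(x^*x)-\Phi(x)^*\Phi(x))\otimes\id+\id\otimes(\Psi(x^*x)-\Psi(x)^*\Psi(x))\big)+\tfrac14\,y^*y$ with $y=\id\otimes\Psi(x)-\Phi(x)\otimes\id$, which is precisely your "discard the mixed term via $y^*y\ge0$" inequality written with its explicit positive remainder, followed by the same Pauli computation and the same application of Lemma~\ref{L1}. Your preliminary reduction to $w_0=0$ and the explicit check that the commutator vectors are real are harmless refinements of details the paper leaves implicit.
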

\begin{proof} From \eqref{T(x)3} one finds that
\begin{eqnarray}\label{T(x)4}
  T(x^*x)-T(x)^*T(x)&=&\frac{1}{2}\bigg(\big(\Phi(x^*x)-\Phi(x)^*\Phi(x)\big)\otimes\id\nonumber\\
  &&+\id\otimes\big(\Psi(x^*x)-\Psi(x)^*\Psi(x)\big)\bigg)\nonumber\\
  &&+\frac{1}{4}\bigg(\id\otimes\Psi(x)-\Phi(x)\otimes\id\bigg)^*\bigg(\id\otimes\Psi(x)-\Phi(x)\otimes\id\bigg).
\end{eqnarray}
Now taking into account the following formula
\begin{equation*}
x^*x=\big(|w_0|^2+\|\wb\|^2\big)\id+\big(w_0\overline{\wb}+\overline{w_0}\wb-i\big[\wb,\overline{\wb}\big]\big)\cdot\s
\end{equation*}
from \eqref{T(x)1} and \eqref{T(x)2} we have
  \begin{eqnarray*}
    \Phi(x^*x)-\Phi(x)^*\Phi(x)=\big(\|\wb\|^2-\|2\mathbf{A}\wb\|^2\big)\id-2i\big(\mathbf{A}[\wb,\overline{\wb}]-2[\mathbf{A}\wb,\mathbf{A}\overline{\wb}]\big)\s,\\
    \Psi(x^*x)-\Psi(x)^*\Psi(x)=\big(\|\wb\|^2-\|2\mathbf{C}\wb\|^2\big)\id-2i\big(\mathbf{C}[\wb,\overline{\wb}]-2[\mathbf{C}\wb,\mathbf{C}\overline{\wb}]\big)\s.
  \end{eqnarray*}
 Therefore, one gets
  \begin{eqnarray*}
    &&\bigg(\Phi(x^*x)-\Phi(x)^*\Phi(x)\bigg)\otimes\id+\id\otimes\bigg(\Psi(x^*x)-\Psi(x)^*\Psi(x)\bigg)\\
   &=&\bigg(\big(\|\wb\|^2-4\|\mathbf{A}\wb\|^2\big)\id-2i\big(\mathbf{A}[\wb,\overline{\wb}]-2[\mathbf{A}\wb,\mathbf{A}\overline{\wb}]\big)\s\bigg)\otimes\id\\
    &&+\id\otimes\bigg(\big(\|\wb\|^2-4\|\mathbf{C}\wb\|^2\big)\id-2i\big(\mathbf{C}[\wb,\overline{\wb}]-2[\mathbf{C}\wb,\mathbf{C}\overline{\wb}]\big)\s\bigg)\\
    &=&\big(2\|\wb\|^2-4\|\mathbf{A}\wb\|^2-4\|\mathbf{C}\wb\|^2\big)\id\otimes\id\\[2mm]
    &&-2i\big(\mathbf{A}[\wb,\overline{\wb}]-2[\mathbf{A}\wb,\mathbf{A}\overline{\wb}]\big)\s\otimes\id-\id\otimes2i\big(\mathbf{C}[\wb,\overline{\wb}]
-2[\mathbf{C}\wb,\mathbf{C}\overline{\wb}]\big)\s
  \end{eqnarray*}
  According to Lemma \ref{L1} we conclude that the last
  expression is positive if and only if \eqref{T(x)5} and
  \eqref{T(x)6} are satisfied.
 Consequently, from \eqref{T(x)4} we infer that under the last
 conditions the mapping $T$ is a KS operator.
  This completes the proof.
\end{proof}

We should stress that the conditions \eqref{T(x)5},\eqref{T(x)6}
are sufficient to be KS-operator.

\begin{cor}\label{T(x)7}
  If the mappings $\Phi$ and $\Psi$ are KS operators, then $T$
  is also KS operator.
\end{cor}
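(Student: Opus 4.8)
The plan is to obtain the statement directly from the operator identity \eqref{T(x)4} established in the proof of Theorem \ref{T(x)4a}, without redoing any Pauli-matrix computation. That identity says that for every $x\in M_2(\bc)$,
\begin{eqnarray*}
T(x^*x)-T(x)^*T(x)&=&\frac12\Big(\big(\Phi(x^*x)-\Phi(x)^*\Phi(x)\big)\otimes\id+\id\otimes\big(\Psi(x^*x)-\Psi(x)^*\Psi(x)\big)\Big)\\
&&+\frac14\big(\id\otimes\Psi(x)-\Phi(x)\otimes\id\big)^*\big(\id\otimes\Psi(x)-\Phi(x)\otimes\id\big),
\end{eqnarray*}
so the whole strategy reduces to showing that each of the three summands on the right-hand side is a positive element of $M_2(\bc)\otimes M_2(\bc)$.

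First I would use the hypotheses: since $\Phi$ is a KS-operator, $P:=\Phi(x^*x)-\Phi(x)^*\Phi(x)\geq0$, and since $\Psi$ is a KS-operator, $Q:=\Psi(x^*x)-\Psi(x)^*\Psi(x)\geq0$. Next I would invoke the elementary facts that $P\geq0$ forces $P\otimes\id\geq0$ and $Q\geq0$ forces $\id\otimes Q\geq0$ (tensoring a positive operator with the identity preserves positivity), and that a positive multiple of a sum of positive operators is again positive; this takes care of the first bracket. The remaining summand has the form $Z^*Z$ with $Z=\id\otimes\Psi(x)-\Phi(x)\otimes\id$, hence is automatically positive. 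Adding the three contributions yields $T(x^*x)-T(x)^*T(x)\geq0$ for all $x\in M_2(\bc)$, which is precisely the Kadison--Schwarz inequality \eqref{ks2} for $T$, so $T\in\mathcal{KS}(M_2,M_2\otimes M_2)$.

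There is no real obstacle here: all the substance was already packaged into \eqref{T(x)4}, and what is left is the routine verification that tensoring with $\id$ and forming $Z^*Z$ preserve positivity. The only point worth flagging is conceptual rather than technical: assuming $\Phi$ and $\Psi$ to be KS-operators is a strictly cleaner sufficient condition than \eqref{T(x)5}--\eqref{T(x)6}, because it makes the first bracket in \eqref{T(x)4} positive immediately, bypassing both the explicit spectral computation and the appeal to Lemma \ref{L1} that were needed in Theorem \ref{T(x)4a}.
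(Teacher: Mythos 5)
Your argument is correct and is exactly the paper's proof: the authors also deduce the corollary directly from the identity \eqref{T(x)4}, since the KS property of $\Phi$ and $\Psi$ makes the two tensored terms positive and the remaining term is of the form $Z^*Z$. Nothing further is needed.
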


The proof immediately follows from $\eqref{T(x)4}$.

\begin{rem}
  We have to stress that if $T$ is KS operator, then the
  mappings $\Phi$ and $\Psi$ no need to be KS.
\end{rem}

\subsection{Case: $\mathbf{C}=\mathbf{A}$}

Now let us study the operator $T$ given by \eqref{T(x)} when
$\mathbf{C}=\mathbf{A}$. Consequently from \eqref{T(x)} one finds
\begin{eqnarray}\label{T(x)5.1}
  T_A(w_0\id+\wb\cdot\s)=w_0\id\otimes\id+\mathbf{A}\wb\cdot\s\otimes\id+\id\otimes\mathbf{A}\wb\cdot\s.
\end{eqnarray}
From Theorem \ref{T(x)4a} we immediately have the following

\begin{cor}
  Let $T_A$ be a mapping given by \eqref{T(x)5.1}. If one has
\begin{eqnarray*}
    \|\wb\|^2-4\|\mathbf{A}\wb\|^2\geq0
  \end{eqnarray*}
  \begin{eqnarray}\label{T(x)5.2}
   2\|\mathbf{A}[\wb,\overline{\wb}]-2[\mathbf{A}\wb,\mathbf{A}\overline{\wb}]\|\leq
    \|\wb\|^2-4\|\mathbf{A}\wb\|^2.
  \end{eqnarray}
  Then $T_A$ is a Kadison-Schwarz operator.
\end{cor}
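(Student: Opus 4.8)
The plan is to obtain this corollary as a direct specialization of Theorem \ref{T(x)4a} to the case $\mathbf{C}=\mathbf{A}$. First I would check that the mapping $T_A$ of \eqref{T(x)5.1} is literally the mapping $T$ of \eqref{T(x)3} with $\mathbf{C}=\mathbf{A}$. Indeed, setting $\Phi(x)=w_0\id+2\mathbf{A}\wb\cdot\s$ and $\Psi(x)=w_0\id+2\mathbf{A}\wb\cdot\s$ in \eqref{T(x)3} gives
\[
\frac{1}{2}\big(\Phi(x)\otimes\id+\id\otimes\Psi(x)\big)=w_0\id\otimes\id+\mathbf{A}\wb\cdot\s\otimes\id+\id\otimes\mathbf{A}\wb\cdot\s=T_A(x),
\]
so $T_A$ is exactly of the form to which Theorem \ref{T(x)4a} applies.

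Next I would rewrite the two hypotheses \eqref{T(x)5} and \eqref{T(x)6} of Theorem \ref{T(x)4a} under the substitution $\mathbf{C}=\mathbf{A}$. Condition \eqref{T(x)5} becomes $\|\wb\|^2-2\|\mathbf{A}\wb\|^2-2\|\mathbf{A}\wb\|^2=\|\wb\|^2-4\|\mathbf{A}\wb\|^2\geq 0$, which is precisely the first displayed hypothesis of the corollary; and condition \eqref{T(x)6} becomes
\[
\|\mathbf{A}[\wb,\overline{\wb}]-2[\mathbf{A}\wb,\mathbf{A}\overline{\wb}]\|+\|\mathbf{A}[\wb,\overline{\wb}]-2[\mathbf{A}\wb,\mathbf{A}\overline{\wb}]\|=2\|\mathbf{A}[\wb,\overline{\wb}]-2[\mathbf{A}\wb,\mathbf{A}\overline{\wb}]\|\leq\|\wb\|^2-4\|\mathbf{A}\wb\|^2,
\]
which is exactly \eqref{T(x)5.2}. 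Hence the assumptions imposed in the corollary coincide with the assumptions of Theorem \ref{T(x)4a} for this particular $T=T_A$, and therefore $T_A$ is a Kadison-Schwarz operator. Since the whole argument is a pure substitution, there is essentially no obstacle; the only point requiring a little care is matching the normalization (the factor $2\mathbf{A}$ occurring inside $\Phi$ and $\Psi$ versus the bare $\mathbf{A}$ in \eqref{T(x)5.1}), so that the two coinciding square-norm terms genuinely add up to the $4\|\mathbf{A}\wb\|^2$ appearing in the statement.
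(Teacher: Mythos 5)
Your proof is correct and is exactly the paper's route: the corollary is obtained by the immediate specialization $\mathbf{C}=\mathbf{A}$ in Theorem \ref{T(x)4a}, with the hypotheses \eqref{T(x)5} and \eqref{T(x)6} collapsing to the two displayed inequalities as you compute. The normalization check (the factor $2\mathbf{A}$ in $\Phi,\Psi$ versus $\mathbf{A}$ in \eqref{T(x)5.1}) is the only detail worth writing out, and you did it correctly.
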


Now using the same argument as in section 3, we can write
\begin{equation}\label{T-DU}
T_A(x)=\tilde UT_{D_{\l_1,\l_2,\l_3}}(x)\tilde U^*, \ \ \ x\in
M_2(\bc)
\end{equation}
for some unitary $\tilde U$. Due to Theorem \ref{ks-s} all
bistochastic KS-operators can be characterized by
$T_{D_{\l_1,\l_2,\l_3}}$ and unitaries. In what follows, for the
sake of shortness by $T_{(\lambda_1,\lambda_2,\lambda_3)}$ we denote
the mapping $T_{D_{\l_1,\l_2,\l_3}}$.

 Next we want to characterize KS operators of the form
$T_{(\l_1,\l_2,\l_3)}$.

\begin{thm}
  If
  \begin{eqnarray*}
    &&4(1+8\l_1\l_2\l_3)\geq(1+4\l_1^2)(3+4\l_2^2+4\l_3^2-4\l_1^2),\\
    &&4(1+8\l_1\l_2\l_3)\geq(1+4\l_2^2)(3+4\l_1^2+4\l_3^2-4\l_2^2),\\
    &&4(1+8\l_1\l_2\l_3)\geq(1+4\l_3^2)(3+4\l_1^2+4\l_2^2-4\l_3^2)
  \end{eqnarray*}
  are satisfied, then $T_{(\l_1,\l_2,\l_3)}$ is a KS
  operator.
\end{thm}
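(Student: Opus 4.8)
The plan is to reduce the claim to the sufficient conditions already established in the corollary following Theorem~\ref{T(x)4a} (the case $\mathbf{C}=\mathbf{A}$), and then to run the same estimate that was carried out in the ``only if'' part of Theorem~\ref{KS-m}, but with $\mathbf{A}=D_{\l_1,\l_2,\l_3}$ replaced by $2D_{\l_1,\l_2,\l_3}$ everywhere. Concretely, for $T_{(\l_1,\l_2,\l_3)}$ the relevant diagonal matrix appearing in the KS inequality \eqref{T(x)5.2} is $2D_{\l_1,\l_2,\l_3}=D_{2\l_1,2\l_2,2\l_3}$, so the natural first step is to substitute $\mu_k=2\l_k$ and apply Theorem~\ref{KS-m}'s computation verbatim to the triple $(\mu_1,\mu_2,\mu_3)$.

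First I would compute, exactly as in \eqref{34-1}, the quantity $\|\wb\|^2-4\|\mathbf{A}\wb\|^2$ and the commutator term $\mathbf{A}[\wb,\overline\wb]-2[\mathbf{A}\wb,\mathbf{A}\overline\wb]$ for $\mathbf{A}=D_{\l_1,\l_2,\l_3}$. Writing $\wb=(w_1,w_2,w_3)$, the diagonal structure gives $\|\mathbf{A}\wb\|^2=\l_1^2|w_1|^2+\l_2^2|w_2|^2+\l_3^2|w_3|^2$, and the $j$-th component of $\mathbf{A}[\wb,\overline\wb]-2[\mathbf{A}\wb,\mathbf{A}\overline\wb]$ is a multiple of $(\l_j-2\l_k\l_l)$ times $(w_k\overline w_l-w_l\overline w_k)$ for $\{j,k,l\}=\{1,2,3\}$, while the ``diagonal'' coefficients in $\|\wb\|^2-4\|\mathbf{A}\wb\|^2$ are $1-4\l_j^2=|1-4\l_j^2|$ under the standing assumption that the quadratic form is nonnegative. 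Then, just as in the passage from \eqref{34-1} to \eqref{ks12} via the reachable inequality \eqref{IM1}, condition \eqref{T(x)5.2} holds for all $\wb\in\bc^3$ provided one has the six inequalities
\begin{eqnarray*}
&&\tilde\a^2\geq \tilde B+\tilde C,\quad \tilde\b^2\geq \tilde A+\tilde C,\quad \tilde\g^2\geq \tilde A+\tilde B,\\
&&\tilde\a\tilde\b\geq \tilde C,\quad \tilde\a\tilde\g\geq \tilde B,\quad \tilde\b\tilde\g\geq \tilde A,
\end{eqnarray*}
where now $\tilde\a=|1-4\l_1^2|$, $\tilde\b=|1-4\l_2^2|$, $\tilde\g=|1-4\l_3^2|$ and $\tilde A=4|\l_1-2\l_2\l_3|^2$, $\tilde B=4|\l_2-2\l_1\l_3|^2$, $\tilde C=4|\l_3-2\l_1\l_2|^2$. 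Substituting these into $\tilde\a^2\geq \tilde B+\tilde C$ etc.\ and simplifying yields exactly the three stated inequalities $4(1+8\l_1\l_2\l_3)\geq(1+4\l_j^2)(3+4\l_k^2+4\l_l^2-4\l_j^2)$, together with an extra one of the form $4\l_1^2+4\l_2^2+4\l_3^2\leq 1+16\l_1\l_2\l_3$ coming from the three ``product'' inequalities $\tilde\a\tilde\b\geq\tilde C$ etc.

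The step I expect to be the main obstacle — and the one mirroring the subtle part of Theorem~\ref{KS-m} — is showing that this extra inequality is redundant, i.e.\ that it is automatically implied by the three displayed ones. I would argue exactly as in Theorem~\ref{KS-m}: assume equality $4(\l_1^2+\l_2^2+\l_3^2)=1+16\l_1\l_2\l_3$ on the boundary surface, substitute into the three main inequalities to obtain $(1-4\l_j^2)(4\l_j^2-8\l_1\l_2\l_3)\le 0$ for each $j$, and then do the sign analysis on $\l_j(\l_j-2\l_k\l_l)$ to reach a contradiction unless the triple is one of the trivial ones; consequently the nonnegativity region defined by the three main inequalities lies entirely on the correct side of the surface, so the fourth inequality never imposes anything new. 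Once that is done, the chain \eqref{T(x)4} $\Rightarrow$ \eqref{T(x)5.2} $\Rightarrow$ $T_{(\l_1,\l_2,\l_3)}(x^*x)-T_{(\l_1,\l_2,\l_3)}(x)^*T_{(\l_1,\l_2,\l_3)}(x)\ge 0$ closes the argument. One caveat to flag explicitly: unlike Theorem~\ref{KS-m}, here only sufficiency is claimed, which is consistent with the fact that Theorem~\ref{T(x)4a} and its corollary give only sufficient conditions; I would not attempt a converse.
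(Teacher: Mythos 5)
Your proposal is correct and follows essentially the same route as the paper: take $\mathbf{A}=D_{\l_1,\l_2,\l_3}$ in the sufficient condition \eqref{T(x)5.2}, estimate via \eqref{IM1} to reach the six coefficient inequalities (your $\tilde\a,\tilde\b,\tilde\g,\tilde A,\tilde B,\tilde C$ are exactly the paper's $B_j$ and $4A_j$), and discard the extra inequality $4(\l_1^2+\l_2^2+\l_3^2)\leq 1+16\l_1\l_2\l_3$ by the same boundary-surface argument used in Theorem \ref{KS-m}. Your observation that this is just the Theorem \ref{KS-m} computation applied to $(2\l_1,2\l_2,2\l_3)$ is a clean way to organize the calculation, but it does not change the substance of the argument.
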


\begin{proof} Taking $\mathbf{A}=D_{\l_1,\l_2,\l_3}$ in \eqref{T(x)5.2},
we obtain
\begin{eqnarray}\label{T(x)5.3}
&&4A_1|w_2\overline{w_3}-\overline{w_2}w_3|^2+4A_2|\overline{w_1}w_3-w_1\overline{w_3}|^2+4A_3|w_1\overline{w_2}-\overline{w_1}w_2|^2 \nonumber \\
&&\leq\Big(B_1|w_1|^2+B_2|w_2|^2+B_3|w_3|^2\Big)^2,
\end{eqnarray}
where $\wb=(w_1,w_2,w_3)\in\mathbb{C}^3$ and
\begin{eqnarray}\label{T(x)5.4}
&&A_1=|\l_1-2\l_2\l_3|^2, \ A_2=|\l_2-2\l_1\l_3|^2, \
A_3=|\l_3-2\l_1\l_2|^2, \\ \label{T(x)5.5}
&&B_1=(1-4\l_1^2), \
B_2=(1-4\l_2^2), \ B_3=(1-4\l_3^2).
\end{eqnarray}
By \eqref{IM1} LHS of \eqref{T(x)5.3} can be evaluated as follows
\begin{eqnarray*}
  4A_1\Big(|w_2|^4+2|w_2|^2|w_3|^2+|w_3|^4\Big)&+&4A_2\Big(|w_1|^4+2|w_1|^2|w_3|^2+|w_3|^4\Big)\\
  &+&4A_3\Big(|w_1|^4+2|w_1|^2|w_2|^2+|w_2|^4\Big).
\end{eqnarray*}
Therefore, from \eqref{T(x)5.3} one gets
\begin{eqnarray*}
  &&\Big(B_1^2-4A_2-4A_3\Big)|w_1|^4+\Big(B_2^2-4A_1-4A_3\Big)|w_2|^4+\Big(B_3^2-4A_1-4A_2\Big)|w_3|^4\\
  &&+ 2|w_2|^2|w_3|^2(B_2B_3-4A_1)+2|w_1|^2|w_3|^2(B_1B_3-4A_2)+2|w_1|^2|w_2|^2(B_1B_2-4A_3)\geq0
\end{eqnarray*}
It is obvious that above inequality is satisfied if one has
\begin{eqnarray*}
  &&B_1^2\geq 4A_2+4A_3, \ \  B_2^2\geq 4A_1+4A_3, \ \ B_3^2\geq
  4A_1+4A_2, \\
  &&B_2B_3\geq4A_1, \ \ B_1B_3\geq4A_2, \ \ B_1B_2\geq4A_3.
\end{eqnarray*}
Substituting above denotations \eqref{T(x)5.4}, \eqref{T(x)5.5} to
the last inequalities, and doing some calculations one derives
\begin{eqnarray}\label{T(x)5.6}
    &&4(1+8\l_1\l_2\l_3)\geq(1+4\l_1^2)(3+4\l_2^2+4\l_3^2-4\l_1^2),\\ \label{T(x)5.7}
    &&4(1+8\l_1\l_2\l_3)\geq(1+4\l_2^2)(3+4\l_1^2+4\l_3^2-4\l_2^2),\\ \label{T(x)5.8}
    &&4(1+8\l_1\l_2\l_3)\geq(1+4\l_3^2)(3+4\l_1^2+4\l_2^2-4\l_3^2),\\ \label{T(x)5.9}
    &&1+16\l_1\l_2\l_3\geq 4\l_1^2+4\l_2^2+4\l_3^2,
  \end{eqnarray}
  where $\l_1,\l_2,\l_3\in\Big[-\frac{1}{2},\frac{1}{2}\Big].$

Now using the same argument as in the proof of Theorem \ref{KS-m}
one can show that \eqref{T(x)5.9} is an extra condition. This
completes the proof.
\end{proof}

It is interesting to study when the operator
$T_{(\l_1,\l_2,\l_3)}$ is complete positive. Let us characterize
completely positivity of $T_{(\l_1,\l_2,\l_3)}$.

\begin{thm}\label{CP1}
A map $T_{(\l_1,\l_2,\l_3)}$ is complete positive if and only if
the followings inequalities are satisfied

\begin{itemize}
    \item [(1)]  $|\l_3|<\frac{1}{2};$
  \newline $4\l_1^2+4\l_2^2+4\l_3^2\leq 1+16\l_1\l_2\l_3;$
  \newline $\l_1^2+\l_2^2+\sqrt{\Big(\l_1^2+\l_2^2\Big)^2-4\l_1\l_2\l_3+\l_3^2}\leq\frac{1}{2};$

    \item [(2)]$\l_3=\frac{1}{2}, \ \
    \l_1,\l_2\in\Big[-\frac{1}{2},\frac{1}{2}\Big]$
   \item [(3)]$\l_3=-\frac{1}{2}, \ \ \l_1=\pm\frac{1}{2}, \ \
   \l_2=\mp\frac{1}{2}$
\end{itemize}
\end{thm}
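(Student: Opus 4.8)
The natural route is Choi's criterion: $T_{(\l_1,\l_2,\l_3)}$ is completely positive if and only if its Choi matrix $\sum_{i,j}E_{ij}\otimes T_{(\l_1,\l_2,\l_3)}(E_{ij})$ (with $E_{ij}$ the matrix units of $M_2(\bc)$) is positive semidefinite, a matrix in $M_2(\bc)\otimes\big(M_2(\bc)\otimes M_2(\bc)\big)$. Before computing it I would shrink the problem using the symmetry of the map. By \eqref{T(x)3} with $\mathbf{C}=\mathbf{A}=D_{\l_1,\l_2,\l_3}$ one has $T_{(\l_1,\l_2,\l_3)}(x)=\frac12\big(\Phi(x)\otimes\id+\id\otimes\Phi(x)\big)$ with $\Phi=\Phi_{(2\l_1,2\l_2,2\l_3)}$ in the notation of Section 3. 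This expression is invariant under the flip $U$ on $M_2(\bc)\otimes M_2(\bc)$, so the range of $T_{(\l_1,\l_2,\l_3)}$ lies in the $U$-fixed subalgebra, which splits as $B(\mathcal{H}_{+})\oplus B(\mathcal{H}_{-})$ along the decomposition $\bc^2\otimes\bc^2=\mathcal{H}_{+}\oplus\mathcal{H}_{-}$ into the symmetric ($\dim 3$) and antisymmetric ($\dim 1$) subspaces; since Pauli matrices are traceless, $(\mathbf{A}\wb\cdot\s)\otimes\id+\id\otimes(\mathbf{A}\wb\cdot\s)$ annihilates the singlet, so on $\mathcal{H}_{-}$ the operator $T_{(\l_1,\l_2,\l_3)}(x)$ is just the scalar $w_0=\t(x)$. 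As $x\mapsto\t(x)$ is a state, hence completely positive, $T_{(\l_1,\l_2,\l_3)}$ is completely positive if and only if its compression $\widetilde{T}\colon M_2(\bc)\to M_3(\bc)$ to $\mathcal{H}_{+}$ is.

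Next I would compute $\widetilde{T}$ explicitly. In the orthonormal basis $f_1\otimes f_1,\ \frac1{\sqrt2}(f_1\otimes f_2+f_2\otimes f_1),\ f_2\otimes f_2$ of $\mathcal{H}_{+}$, a direct calculation from $\frac12(\Phi(x)\otimes\id+\id\otimes\Phi(x))$ gives, writing $\Phi(x)=(m_{ij})$, the tridiagonal form
\begin{equation*}
\widetilde{T}(x)=\begin{pmatrix} m_{11} & \frac1{\sqrt2}m_{12} & 0\\ \frac1{\sqrt2}m_{21} & \frac12(m_{11}+m_{22}) & \frac1{\sqrt2}m_{12}\\ 0 & \frac1{\sqrt2}m_{21} & m_{22}\end{pmatrix}.
\end{equation*}
Evaluating at the matrix units, where $\Phi(E_{11})=\mathrm{diag}(\tfrac12+\l_3,\tfrac12-\l_3)$, $\Phi(E_{22})=\mathrm{diag}(\tfrac12-\l_3,\tfrac12+\l_3)$, $\Phi(E_{12})=\bigl(\begin{smallmatrix}0&\l_1+\l_2\\ \l_1-\l_2&0\end{smallmatrix}\bigr)$, $\Phi(E_{21})=\bigl(\begin{smallmatrix}0&\l_1-\l_2\\ \l_1+\l_2&0\end{smallmatrix}\bigr)$, I assemble the $6\times6$ Choi matrix $\mathcal{C}=\big(\widetilde{T}(E_{ij})\big)_{i,j=1}^{2}$. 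A permutation of the six basis vectors brings $\mathcal{C}$ to block diagonal form $\mathcal{C}\cong\mathcal{C}_1\oplus\mathcal{C}_2$ with
\begin{equation*}
\mathcal{C}_1=\begin{pmatrix}\frac12+\l_3 & a & 0\\ a & \frac12 & b\\ 0 & b & \frac12-\l_3\end{pmatrix},\qquad \mathcal{C}_2=\begin{pmatrix}\frac12-\l_3 & b & 0\\ b & \frac12 & a\\ 0 & a & \frac12+\l_3\end{pmatrix},
\end{equation*}
where $a=\frac1{\sqrt2}(\l_1+\l_2)$ and $b=\frac1{\sqrt2}(\l_1-\l_2)$. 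Since $\mathcal{C}_2$ is the conjugate of $\mathcal{C}_1$ by the order-reversing permutation matrix, $\mathcal{C}\ge0$ if and only if $\mathcal{C}_1\ge0$.

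It then remains to decide when the $3\times3$ Hermitian matrix $\mathcal{C}_1$ is positive semidefinite, for which I would use the principal-minor criterion. The $1\times1$ minors give $|\l_3|\le\frac12$; the three $2\times2$ minors reduce, via $a^2=\frac12(\l_1+\l_2)^2$ and $b^2=\frac12(\l_1-\l_2)^2$, to $(\l_1+\l_2)^2\le\frac12+\l_3$ and $(\l_1-\l_2)^2\le\frac12-\l_3$; and $\det\mathcal{C}_1\ge0$ expands to $4(\l_1^2+\l_2^2+\l_3^2)\le1+16\l_1\l_2\l_3$. When $|\l_3|<\frac12$ these combine into the three inequalities of case (1) (the two quadratic inequalities being repackaged as the single square-root condition). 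When $\l_3=\frac12$ or $\l_3=-\frac12$ a diagonal entry of $\mathcal{C}_1$ vanishes, which forces the corresponding row and column to vanish and then constrains the remaining parameters, giving cases (2) and (3); conversely, in each of these three regimes one checks directly that $\mathcal{C}_1\ge0$.

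The step I expect to be the main obstacle is precisely this last one: the boundary analysis at $|\l_3|=\frac12$, where positive semidefiniteness is not detected by the ``interior'' inequalities, together with the bookkeeping needed to verify that the system of quadratic conditions coming from the minors of $\mathcal{C}_1$ is exactly equivalent to the stated list — in particular, that $\det\mathcal{C}_1\ge0$ is genuinely independent of the $2\times2$ conditions, and that the square-root inequality faithfully encodes $(\l_1\pm\l_2)^2\le\frac12\pm\l_3$. The structural reductions (flip-invariance, compression to $\mathcal{H}_{+}$, block-diagonalization of the Choi matrix) are routine once set up; the algebraic matching of the final inequalities is where the care lies.
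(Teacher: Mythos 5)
Your structural reductions are correct and genuinely different from the paper's route: the paper works directly with the $8\times 8$ Choi matrix $\widehat{T}_{(\l_1,\l_2,\l_3)}$, using a Schur complement when $|\l_3|<\frac12$ and principal minors at $\l_3=\pm\frac12$, whereas you exploit flip-invariance to split off the singlet (on which $T_{(\l_1,\l_2,\l_3)}$ acts as the normalized trace, hence CP) and reduce everything to the single $3\times3$ matrix $\mathcal{C}_1$. I checked your compression formula, the values $\Phi(E_{ij})$, the block decomposition $\mathcal{C}\cong\mathcal{C}_1\oplus\mathcal{C}_2$, and the minors of $\mathcal{C}_1$; they are all correct. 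On $|\l_3|<\frac12$ your conditions do reproduce case (1): writing $u=\l_1+\l_2$, $v=\l_1-\l_2$, the determinant condition becomes $\frac{u^2}{1/2+\l_3}+\frac{v^2}{1/2-\l_3}\le 1$, which already implies your two $2\times2$ minor conditions and, given $|\l_3|<\frac12$, is equivalent to the square-root inequality of case (1); so your route even shows that the third inequality in case (1) is redundant, something the paper's eigenvalue computation obscures.

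The genuine gap is exactly the step you defer: the boundary $\l_3=\pm\frac12$. Carrying out your own argument there does \emph{not} give cases (2) and (3). For $\l_3=\frac12$ the $(3,3)$ entry of $\mathcal{C}_1$ vanishes, so positive semidefiniteness forces $b=\frac{\l_1-\l_2}{\sqrt2}=0$, i.e.\ $\l_1=\l_2$ (and then $a^2\le\frac12$, i.e.\ $|\l_1|\le\frac12$); for $\l_3=-\frac12$ it forces $\l_1=-\l_2$ with $|\l_1|\le\frac12$. These conclusions contradict the statement: case (2) claims every $\l_1,\l_2\in[-\frac12,\frac12]$ works, yet $T_{(1/2,\,0,\,1/2)}$ is not CP, since its Choi matrix has the vanishing diagonal entry $[T_{(\l_1,\l_2,\l_3)}(e_{11})]_{44}=\frac12(1-2\l_3)=0$ in a row containing the nonzero entries $\frac12(\l_1-\l_2)$, so it cannot be positive semidefinite; and case (3) is too restrictive, since for instance $T_{(0,0,-1/2)}$ has Choi matrix $\mathrm{diag}\big(0,\tfrac12,\tfrac12,1,1,\tfrac12,\tfrac12,0\big)\ge 0$ and is CP, as is $T_{(1/4,-1/4,-1/2)}$. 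So completing your boundary analysis honestly refutes cases (2) and (3) rather than proving them; the paper's own boundary argument rests on nonnegativity of leading principal minors, which characterizes positive definiteness but not semidefiniteness, and this is where its conclusion goes wrong. In your write-up you should therefore replace the last sentence of the boundary discussion by the corrected cases ($\l_3=\frac12$: $\l_1=\l_2\in[-\frac12,\frac12]$; $\l_3=-\frac12$: $\l_1=-\l_2\in[-\frac12,\frac12]$) instead of asserting that your computation yields (2) and (3) as stated.
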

\begin{proof}
  From \cite{Choi} we know that the complete positivity of
  $T_{(\l_1,\l_2,\l_3)}$ is equivalent to the positivity of the
  following matrix
  \begin{eqnarray*}
\widehat{T}_{(\l_1,\l_2,\l_3)}=\left(%
\begin{array}{cc}
  T_{(\l_1,\l_2,\l_3)}(e_{11}) & T_{(\l_1,\l_2,\l_3)}(e_{12}) \\
  T_{(\l_1,\l_2,\l_3)}(e_{21}) & T_{(\l_1,\l_2,\l_3)}(e_{22}) \\
\end{array}%
\right).
\end{eqnarray*}
It is clear that
\begin{eqnarray*}
  &&T_{(\l_1,\l_2,\l_3)}(e_{11})=\frac{1}{2}\left(%
\begin{array}{cccc}
  1+2\l_3 & 0 & 0 & 0 \\
  0 & 1 & 0 & 0 \\
  0 & 0 & 1 & 0 \\
  0 & 0 & 0 & 1-2\l_3 \\
\end{array}%
\right),\\
 &&T_{(\l_1,\l_2,\l_3)}(e_{12})=\frac{1}{2}\left(%
\begin{array}{cccc}
  0 & \l_1+\l_2 & \l_1+\l_2 & 0 \\
  \l_1-\l_2 & 0 & 0 & \l_1+\l_2 \\
  \l_1-\l_2 & 0 & 0 & \l_1+\l_2 \\
  0 & \l_1-\l_2 & \l_1-\l_2 & 0 \\
\end{array}%
\right)
\end{eqnarray*}
and
$T_{(\l_1,\l_2,\l_3)}(e_{22})=\id\o\id-T_{(\l_1,\l_2,\l_3)}(e_{11})$,
$T_{(\l_1,\l_2,\l_3)}(e_{21})=T_{(\l_1,\l_2,\l_3)}(e_{12})^*$.

(1). According to \cite[Theorem 1.3.3]{Bhat} the matrix
$\widehat{T}_{(\l_1,\l_2,\l_3)}$ is positive if and only if
\begin{eqnarray}\label{T(x)5.14}
T_{(\l_1,\l_2,\l_3)}(e_{11})-T_{(\l_1,\l_2,\l_3)}(e_{12})T_{(\l_1,\l_2,\l_3)}(e_{22})^{-1}T_{(\l_1,\l_2,\l_3)}(e_{21})\geq0,
\end{eqnarray}
where $T_{(\l_1,\l_2,\l_3)}(e_{11})$ and
$T_{(\l_1,\l_2,\l_3)}(e_{22})$ are positive matrices.

It is easy to see that $T_{(\l_1,\l_2,\l_3)}(e_{11})$ and
$T_{(\l_1,\l_2,\l_3)}(e_{22})$ are positive if and only if
\begin{eqnarray}\label{T(x)5.15}
  |\l_3|\leq\frac{1}{2}.
\end{eqnarray}

One can calculate that \eqref{T(x)5.14} is equivalent to
\begin{eqnarray*}
  \left(%
\begin{array}{cccc}
  \a_1 & 0 & 0 & \a_4 \\
  0 & 1+\a_3 & \a_3 & 0 \\
  0 & \a_3 & 1+\a_3 & 0 \\
  \a_4 & 0 & 0 & \a_2 \\
\end{array}%
\right)\geq0
\end{eqnarray*}
where
\begin{eqnarray*}
&& \a_1=1+2\l_3-2(\l_1+\l_2)^2, \ \ \
\a_2=1-2\l_3-2(\l_1-\l_2)^2,\\[2mm]
&&\a_3=\frac{(\l_1-\l_2)^2}{2\l_3-1}-\frac{(\l_1+\l_2)^2}{2\l_3+1},\
\ \ \a_4=-2\Big(\l_1^2-\l_2^2\Big).
\end{eqnarray*}

It is known that the  matrix is positive if and only if the
eigenvalues are positive. The eigenvalues of the last matrix can
be calculated as follows
\begin{eqnarray*}
 &&s_1=1, \ \ \
  s_2=\frac{4\l_1^2+4\l_2^2+4\l_3^2-16\l_1\l_2\l_3-1}{4\l_3^2-1},\\
 &&s_3=1-2\l_1^2-2\l_2^2+2\sqrt{\Big(\l_1^2+\l_2^2\Big)^2-4\l_1\l_2\l_3+\l_3^2},\\
  &&s_4=1-2\l_1^2-2\l_2^2-2\sqrt{\Big(\l_1^2+\l_2^2\Big)^2-4\l_1\l_2\l_3+\l_3^2}.
\end{eqnarray*}
To check the their positivity, it is enough to have $s_2\geq0$ and
$s_4\geq0.$ These mean
\begin{eqnarray}\label{T(x)5.16}
&&\l_3\neq\frac{1}{2};\\ \label{T(x)5.17}
&&4\l_1^2+4\l_2^2+4\l_3^2\leq 1+16\l_1\l_2\l_3;\\ \label{T(x)5.18}
&&\l_1^2+\l_2^2+\sqrt{\Big(\l_1^2+\l_2^2\Big)^2-4\l_1\l_2\l_3+\l_3^2}\leq\frac{1}{2};\\
\label{T(x)5.19}
&&\Big(\l_1^2+\l_2^2\Big)^2+\l_3^2\geq4\l_1\l_2\l_3.
\end{eqnarray}
Note that the expression standing inside the square root is always
positive, indeed, we have
\begin{eqnarray*}
  \Big(\l_1^2+\l_2^2\Big)^2+\l_3^2\geq2\Big(\l_1^2+\l_2^2\Big)\l_3\geq2(2\l_1\l_2)\l_3=4\l_1\l_2\l_3.
\end{eqnarray*}
Therefore, from \eqref{T(x)5.15}, \eqref{T(x)5.16},
\eqref{T(x)5.17} and \eqref{T(x)5.18} one has
\begin{eqnarray*}
  &&|\l_3|<\frac{1}{2};\\
  &&4\l_1^2+4\l_2^2+4\l_3^2\leq 1+16\l_1\l_2\l_3;\\
  &&\l_1^2+\l_2^2+\sqrt{\Big(\l_1^2+\l_2^2\Big)^2-4\l_1\l_2\l_3+\l_3^2}\leq\frac{1}{2}.
\end{eqnarray*}

(2). Let $\l_3=\frac{1}{2}$, then $\widehat{T}_{(\l_1,\l_2,\l_3)}$
has the following form

\begin{eqnarray*}
  \widehat{T}_{(\l_1,\l_2,\frac{1}{2})}=\left(%
\begin{array}{cccccccc}
  2 & 0 & 0 & 0 & 0 & \b_1 & \b_1 & 0\\
  0 & 1 & 0 & 0 & \b_2 & 0 & 0 & \b_1 \\
  0 & 0 & 1 & 0 & \b_2 & 0 & 0 & \b_1 \\
  0 & 0 & 0 & 0 & 0 & \b_2 & \b_2 & 0 \\
  0 & \b_2 & \b_2 & 0 & 0 & 0 & 0 & 0 \\
  \b_1 & 0 & 0 & \b_2 & 0 & 1 & 0 & 0 \\
  \b_1 & 0 & 0 & \b_2 & 0 & 0 & 1 & 0 \\
  0 & \b_1 & \b_1 & 0 & 0 & 0 & 0 & 2 \\
\end{array}%
\right),
\end{eqnarray*}
where where $\b_1=\l_1+\l_2$, $\b_2=\l_1-\l_2$,
$\l_1,\l_2\in\Big[-\frac{1}{2},\frac{1}{2}\Big].$ According to the
Silvester's criterion, the matrix given above is positive if and
only if the leading principal minors are positive. Let $D_n,
(n=\overline{1,8})$ be the leading principal minor of
$\widehat{T}_{(\l_1,\l_2,\frac{1}{2})}$. One can see that for each
$n\in\{1,\dots,8\}$, the minor $D_n$ is positive. Hence, if
$\l_3=\frac{1}{2}$ then $\widehat{T}_{(\l_1,\l_2,\frac{1}{2})}$ is
positive.

(3). Now assume $\l_3=-\frac{1}{2}$, then one finds
\begin{eqnarray*}
  \widehat{T}_{(\l_1,\l_2,-\frac{1}{2})}=\left(%
\begin{array}{cccccccc}
  0 & 0 & 0 & 0 & 0 & \b_1 & \b_1 & 0\\
  0 & 1 & 0 & 0 & \b_2 & 0 & 0 & \b_2 \\
  0 & 0 & 1 & 0 & \b_2 & 0 & 0 & \b_1 \\
  0 & 0 & 0 & 2 & 0 & \b_2 & \b_2 & 0 \\
  0 & \b_2 & \b_1 & 0 & 2 & 0 & 0 & 0 \\
  \b_1 & 0 & 0 & \b_2 & 0 & 1 & 0 & 0 \\
  \b_1 & 0 & 0 & \b_1 & 0 & 0 & 1 & 0 \\
  0 & \b_1 & \b_1 & 0 & 0 & 0 & 0 & 0 \\
\end{array}%
\right),
\end{eqnarray*}
where as before  $\b_1=\l_1+\l_2$, $\b_2=\l_1-\l_2$,
$\l_1,\l_2\in\Big[-\frac{1}{2},\frac{1}{2}\Big].$ One can
calculate that principal minors of the last matrix are
\begin{eqnarray*}
  &&D_n=0 \ (n=\overline{1,5}),\\
  &&D_6=(\l_1+\l_2)^2\Big(4(\l_1-\l_2)^2-4\Big),\\
  &&D_7=(\l_1+\l_2)^2\Big(8(\l_1-\l_2)^2-8\Big),\\
  &&D_8=16(\l_1+\l_2)^4,
\end{eqnarray*}
It is easy to see that $\widehat{T}_{(\l_1,\l_2,-\frac{1}{2})}$ is
positive if $D_6\geq0$ and $D_7\geq0$. It implies that
$\l_1=\pm\frac{1}{2}, \
   \l_2=\mp\frac{1}{2}$.
This completes the proof.
\end{proof}

In \cite{RSW} a characterization of completely positivity of
$\Phi_{(\l_1,\l_2,\l_3)}$ has been given. Namely, the following
result holds.
\begin{thm}\label{CP2}
A mapping $\Phi_{(\l_1,\l_2,\l_3)}$ is complete positive if and
only if the following inequalities are satisfied
\begin{eqnarray}\label{T(x)5.20}
&&(\l_1+\l_2)^2\leq(1+\l_3)^2, \\ \label{T(x)5.21}
&&(\l_1-\l_2)^2\leq(1-\l_3)^2, \\ \label{T(x)5.22}
&&\Big(1-\Big(\l_1^2+\l_2^2+\l_3^2\Big)\Big)^2\geq4\Big(\l_1^2\l_2^2+\l_2^2\l_3^2+\l_1^2\l_3^2-2\l_1\l_2\l_3\Big).
\end{eqnarray}
\end{thm}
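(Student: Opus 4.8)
The plan is to reduce everything to Choi's criterion, following the same route as the proof of Theorem~\ref{CP1}. Since $\Phi_{(\l_1,\l_2,\l_3)}$ is unital and trace preserving by construction, the only issue is complete positivity, and by \cite{Choi} this is equivalent to positivity of the Choi matrix
\[
\widehat{\Phi}_{(\l_1,\l_2,\l_3)}=\begin{pmatrix}\Phi_{(\l_1,\l_2,\l_3)}(e_{11}) & \Phi_{(\l_1,\l_2,\l_3)}(e_{12})\\ \Phi_{(\l_1,\l_2,\l_3)}(e_{21}) & \Phi_{(\l_1,\l_2,\l_3)}(e_{22})\end{pmatrix}.
\]
So the first step is to compute the four blocks. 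Expanding the matrix units in the Pauli basis, $e_{11}=\tfrac12(\id+\s_3)$, $e_{22}=\tfrac12(\id-\s_3)$, $e_{12}=\tfrac12(\s_1+i\s_2)$, $e_{21}=e_{12}^{*}$, and using $\Phi_{(\l_1,\l_2,\l_3)}(\id)=\id$ together with $\Phi_{(\l_1,\l_2,\l_3)}(\s_k)=\l_k\s_k$, one finds
\[
\Phi_{(\l_1,\l_2,\l_3)}(e_{11})=\frac12\begin{pmatrix}1+\l_3 & 0\\ 0 & 1-\l_3\end{pmatrix},\qquad \Phi_{(\l_1,\l_2,\l_3)}(e_{12})=\frac12\begin{pmatrix}0 & \l_1+\l_2\\ \l_1-\l_2 & 0\end{pmatrix},
\]
with $\Phi_{(\l_1,\l_2,\l_3)}(e_{22})=\id-\Phi_{(\l_1,\l_2,\l_3)}(e_{11})$ and $\Phi_{(\l_1,\l_2,\l_3)}(e_{21})=\Phi_{(\l_1,\l_2,\l_3)}(e_{12})^{*}$.

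The second step is to read off the positivity conditions. Written out as a $4\times4$ matrix in the standard basis of $\bc^2\o\bc^2$, the matrix $\widehat{\Phi}_{(\l_1,\l_2,\l_3)}$ has nonzero entries only on the main diagonal and on the anti-diagonal; hence, after the permutation of the basis that groups the first and last vectors together and the middle two together, it becomes block diagonal with the two $2\times2$ blocks
\[
\frac12\begin{pmatrix}1+\l_3 & \l_1+\l_2\\ \l_1+\l_2 & 1+\l_3\end{pmatrix},\qquad \frac12\begin{pmatrix}1-\l_3 & \l_1-\l_2\\ \l_1-\l_2 & 1-\l_3\end{pmatrix}.
\]
Each of these is positive exactly when its diagonal entry and its determinant are nonnegative, i.e. when $1+\l_3\geq0$ and $(1+\l_3)^2\geq(\l_1+\l_2)^2$, respectively $1-\l_3\geq0$ and $(1-\l_3)^2\geq(\l_1-\l_2)^2$. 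Since complete positivity implies positivity one already has $|\l_k|\leq1$, so $|1\pm\l_3|=1\pm\l_3$ and these conditions are precisely \eqref{T(x)5.20} and \eqref{T(x)5.21}. Multiplying \eqref{T(x)5.20} by \eqref{T(x)5.21} and using the elementary identity
\[
\bigl[(1+\l_3)^2-(\l_1+\l_2)^2\bigr]\bigl[(1-\l_3)^2-(\l_1-\l_2)^2\bigr]=\bigl(1-\l_1^2-\l_2^2-\l_3^2\bigr)^2-4\bigl(\l_1^2\l_2^2+\l_2^2\l_3^2+\l_1^2\l_3^2-2\l_1\l_2\l_3\bigr),
\]
whose right-hand side equals $16\det\widehat{\Phi}_{(\l_1,\l_2,\l_3)}$, then yields \eqref{T(x)5.22}. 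For the converse, if \eqref{T(x)5.20}--\eqref{T(x)5.22} hold (in the regime $|\l_3|\leq1$) then both $2\times2$ summands are positive, so $\widehat{\Phi}_{(\l_1,\l_2,\l_3)}\geq0$ and $\Phi_{(\l_1,\l_2,\l_3)}$ is completely positive.

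There is no genuine obstacle here: the whole argument is the block decomposition of the Choi matrix, which is immediate once one notices its ``X-shape'', together with the polynomial identity above. The only point worth a remark is that \eqref{T(x)5.22}, being literally the product of \eqref{T(x)5.20} and \eqref{T(x)5.21}, is redundant, and is kept mainly to parallel the formulation of Theorem~\ref{CP1}; one should also note the standing assumption $\l_1,\l_2,\l_3\in[-1,1]$ inherited from positivity, without which the determinant condition \eqref{T(x)5.22} alone would fail to fix the signs in \eqref{T(x)5.20}--\eqref{T(x)5.21}.
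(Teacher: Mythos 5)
Your argument is correct, but note that the paper does not actually prove this statement: Theorem \ref{CP2} is quoted from \cite{RSW} without proof, so there is no internal proof to match. What you give is a legitimate self-contained derivation, and it runs parallel in spirit to the paper's own proof of Theorem \ref{CP1}: reduce complete positivity to positivity of the Choi matrix via \cite{Choi}, compute the blocks $\Phi_{(\l_1,\l_2,\l_3)}(e_{ij})$ in the Pauli basis, and test positivity. Where the paper (for $T_{(\l_1,\l_2,\l_3)}$) uses a Schur-complement and eigenvalue computation, you exploit the ``X-shape'' of the $4\times 4$ Choi matrix to split it into the two $2\times 2$ circulant blocks $\frac12\bigl(\begin{smallmatrix}1\pm\l_3 & \l_1\pm\l_2\\ \l_1\pm\l_2 & 1\pm\l_3\end{smallmatrix}\bigr)$, which is cleaner and makes the equivalence with \eqref{T(x)5.20}--\eqref{T(x)5.21} immediate; your polynomial identity showing that \eqref{T(x)5.22} is exactly $16\det\widehat{\Phi}_{(\l_1,\l_2,\l_3)}$, i.e.\ the product of the first two conditions, checks out. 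Your closing caveat is also substantively right and worth keeping: as literally stated the three squared inequalities do not force complete positivity without the standing restriction $\l_k\in[-1,1]$ (e.g.\ $(\l_1,\l_2,\l_3)=(0,0,2)$ satisfies all three but the map is not even positive), so the ``if'' direction needs the sign information $1\pm\l_3\ge 0$, which in the paper's setting comes from the tacit assumption $|\l_k|\le 1$ made in Section 3; and given that assumption \eqref{T(x)5.22} is indeed redundant.
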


\textbf{Example.}
  Let us consider a mapping $T_{(a,a,b)}$, where $a,b\in\Big[-\frac{1}{2},\frac{1}{2}\Big]$. Then one can see that
  $\Phi_{(2a,2a,2b)}$ is the corresponding operator. Now let us
  check conditions of  Theorem \ref{CP1} and Theorem \ref{CP2}.
  From conditions of Theorem \ref{CP1} one finds
  \begin{eqnarray}\label{T(x)5.23}
    &&|b|<\frac{1}{2};\\ \label{T(x)5.24}
    &&a^2\leq\frac{1+2b}{8};\\ \label{T(x)5.25}
    &&1-4a^2-2\sqrt{\Big(2a^2-b\Big)^2}\geq0.
  \end{eqnarray}
  Now we would like to show that \eqref{T(x)5.25} is extra
  condition. It means that the left hand side of \eqref{T(x)5.25} is always positive if \eqref{T(x)5.23} and
  \eqref{T(x)5.24} are satisfied. Let \eqref{T(x)5.23} and
  \eqref{T(x)5.24} be true. Then
  \begin{eqnarray*}
    1-4a^2-2\sqrt{\Big(2a^2-b\Big)^2}&\geq&
    1-4\cdot\frac{1+2b}{8}-2\sqrt{\bigg(2\cdot\frac{1+2b}{8}-b\bigg)^2}\\
    &=&1-\frac{1+2b}{2}-2\sqrt{\bigg(\frac{1-2b}{4}\bigg)^2}=0.
  \end{eqnarray*}

  Now from conditions of Theorem \ref{CP2} one has
  \begin{eqnarray}\label{T(x)5.26}
    a^2\leq\frac{(1+2b)^2}{16}.
  \end{eqnarray}

  The graphics of the inequalities \eqref{T(x)5.23},
  \eqref{T(x)5.24} and \eqref{T(x)5.26} are given in the following
figure.

\begin{figure}[h] \centering
\begin{center}
\includegraphics[width=0.55\columnwidth,clip]{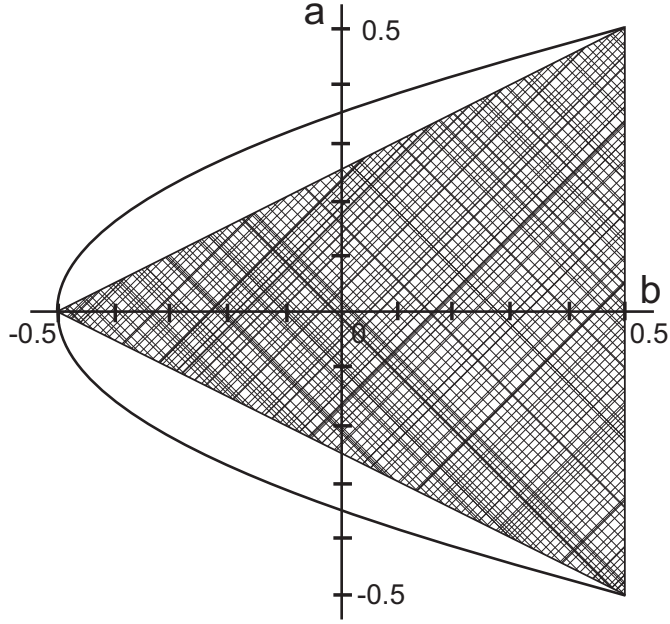}
\caption{Shaded region is CP operators corresponding to
$\Phi_{(2a,2a,2b)}$. White region indicates CP operators
corresponding to $T_{(a,a,b)}$.}
\end{center}
\end{figure}

  From the graph we can see that the class of CP operators
  corresponding to $T_{(a,a,b)}$ are much bigger then the class of
  CP operators corresponding to $\Phi_{(2a,2a,2b)}$

\subsection{Case: $\mathbf{A}\wb=\l\wb$, $\mathbf{C}\wb=\mu\wb$}

In this subsection we consider a more concrete case, namely,
$\mathbf{A}\wb=\l\wb$ and $\mathbf{C}\wb=\mu\wb$. By $T_{\l,\m}$
we denote the corresponding operator (see \eqref{T(x)}). Then one
can see that $\Phi_{(2\l,2\l,2\l)}$ and $\Psi_{(2\m,2\m,2\m)}$ are
the corresponding mappings (see \eqref{T(x)1}-\eqref{T(x)3}).  Due
to Theorem \ref{ks-d} one can find that $\Phi_{(2\l,2\l,2\l)}$ is
a KS-operator if and only if
\begin{eqnarray*}
  2|\l| |1-2\l| \|[\wb,\overline{\wb}]\|\leq \big(1-4\l^2\big)\|\wb\|^2.
\end{eqnarray*}

From $\|[\wb,\overline{\wb}]\|\leq\|\wb\|^2$ (if we choose
$\wb=(0,1,i)$, then one gets $\|[\wb,\overline{\wb}]\|=\|\wb\|^2$)
one finds
\begin{eqnarray*}
  2|\l|(1-2\l)\leq 1-4\l^2.
\end{eqnarray*}
The solution of the last inequality is
$\l\in\big[-\frac{1}{4};\frac{1}{2}\big]$.

Similarly, one finds that $\Psi_{(2\m,2\m,2\m)}$ is a KS-operator
if and only if $\mu \in \big[-\frac{1}{4};\frac{1}{2}\big]$.

From Corollary \ref{T(x)7} we immediately conclude that if $\l,
\mu \in \big[-\frac{1}{4};\frac{1}{2}\big]$ then $T_{\l,\m}$ is a
KS- operator.

Next we want to provide other values of $\l$ and $\mu$ for which
$T_{\l,\m}$ is Kadison-Schwarz.

\begin{thm}\label{T(x)4b} Let $T_{\l,\m}:M_2(\mathbb{C})\rightarrow M_2(\mathbb{C})\otimes
M_2(\mathbb{C})$ be given by \eqref{T(x)}. If
\begin{eqnarray*}
 |\l||1-2\l|+|\mu||1-2\mu|\leq1-2\l^2-2\mu^2
\end{eqnarray*}
is satisfied, then the map $T_{\l,\m}$ is KS-operator.
\end{thm}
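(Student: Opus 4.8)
The plan is to apply Theorem~\ref{T(x)4a} with the special choice $\mathbf{A}\wb=\l\wb$ and $\mathbf{C}\wb=\m\wb$, so that the two abstract conditions \eqref{T(x)5} and \eqref{T(x)6} reduce to scalar inequalities in $\l$, $\m$, and the two geometric quantities $\|\wb\|^2$ and $\|[\wb,\overline{\wb}]\|$. First I would substitute the eigenvalue relations into \eqref{T(x)5}: since $\|\mathbf{A}\wb\|=|\l|\,\|\wb\|$ and $\|\mathbf{C}\wb\|=|\m|\,\|\wb\|$, condition \eqref{T(x)5} becomes $(1-2\l^2-2\m^2)\|\wb\|^2\ge0$, i.e. $1-2\l^2-2\m^2\ge0$, which is implied by the hypothesis (the right-hand side of the displayed inequality in the statement is nonnegative, being $\ge$ a sum of absolute values).

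Next I would handle \eqref{T(x)6}. With $\mathbf{A}\wb=\l\wb$ one computes $\mathbf{A}[\wb,\overline{\wb}]=\l[\wb,\overline{\wb}]$ and $[\mathbf{A}\wb,\mathbf{A}\overline{\wb}]=[\l\wb,\l\overline{\wb}]=\l^2[\wb,\overline{\wb}]$, so
\begin{eqnarray*}
\mathbf{A}[\wb,\overline{\wb}]-2[\mathbf{A}\wb,\mathbf{A}\overline{\wb}]=(\l-2\l^2)[\wb,\overline{\wb}]=\l(1-2\l)[\wb,\overline{\wb}],
\end{eqnarray*}
and similarly the $\mathbf{C}$-term equals $\m(1-2\m)[\wb,\overline{\wb}]$. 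Hence the left-hand side of \eqref{T(x)6} is $\big(|\l||1-2\l|+|\m||1-2\m|\big)\,\|[\wb,\overline{\wb}]\|$, while the right-hand side is $\big(1-2\l^2-2\m^2\big)\|\wb\|^2$. Using the elementary bound $\|[\wb,\overline{\wb}]\|\le\|\wb\|^2$ (already invoked in the text, with equality at $\wb=(0,1,i)$), it suffices to have
\begin{eqnarray*}
\big(|\l||1-2\l|+|\m||1-2\m|\big)\|\wb\|^2\le\big(1-2\l^2-2\m^2\big)\|\wb\|^2,
\end{eqnarray*}
which is exactly the hypothesis. Therefore both \eqref{T(x)5} and \eqref{T(x)6} hold for all $\wb\in\bc^3$, and Theorem~\ref{T(x)4a} gives that $T_{\l,\m}$ is a Kadison-Schwarz operator.

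The only subtle point — the step I would be most careful about — is checking that $1-2\l^2-2\m^2\ge 0$ is genuinely implied rather than an independent assumption: this follows because $|\l||1-2\l|+|\m||1-2\m|\ge 0$, so the hypothesis forces the right-hand side $1-2\l^2-2\m^2$ to be nonnegative, which is precisely what condition \eqref{T(x)5} requires. Everything else is direct substitution into Theorem~\ref{T(x)4a}, so there is no real obstacle beyond bookkeeping with absolute values.
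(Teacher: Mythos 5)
Your proposal is correct and follows essentially the same route as the paper: substituting $\mathbf{A}\wb=\l\wb$, $\mathbf{C}\wb=\m\wb$ into conditions \eqref{T(x)5} and \eqref{T(x)6} of Theorem \ref{T(x)4a} and reducing via the bound $\|[\wb,\overline{\wb}]\|\leq\|\wb\|^2$ to the stated scalar inequality. Your remark that $1-2\l^2-2\m^2\geq 0$ is forced by the hypothesis is a correct (and slightly more explicit) handling of a point the paper leaves implicit.
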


\begin{proof} From \eqref{T(x)5},\eqref{T(x)6} one has
\begin{eqnarray*}
 && \l^2+\mu^2\leq\frac{1}{2}\\
 && \big(|\l||1-2\l|+|\mu||1-2\mu|\big)\big\|[\wb,\overline{\wb}]\big\|\leq
  \big(1-2\l^2-2\mu^2\big)\|\wb\|^2.
\end{eqnarray*}
From the arbitrariness of $\wb$ with
$\big\|[\wb,\overline{\wb}]\big\|\leq\|\wb\|^2$ we find
\begin{eqnarray*}
|\l||1-2\l|+|\mu||1-2\mu|\leq1-2\l^2-2\mu^2,
\end{eqnarray*}
which is the required assertion.
\end{proof}

From the figure 2, we conclude that if the pair $(\l,\mu)$ belongs
to the outside of the yellow and red regions, then the mappings
$\Phi_{(2\l,2\l,2\l)}$ and $\Psi_{(2\m,2\m,2\m)}$ are not
Kadison-Schwarz, but the mapping $T_{\l,\m}$ is Kadison-Schwarz.

Now we are interested when the operator $T_{\l,\m}$ is complete
positive.

\begin{thm}\label{T(x)8}
Let $T_{\l,\m}:M_2(\mathbb{C})\rightarrow M_2(\mathbb{C})\otimes
M_2(\mathbb{C})$ be given by \eqref{T(x)}. Then $T_{\l,\m}$ is
completely positive if and only if
\begin{eqnarray*}
  &&\l+\m+1-2\sqrt{\l^2-\l\m+\m^2}\geq0\\
  &&\l+\m\leq1
\end{eqnarray*}
\end{thm}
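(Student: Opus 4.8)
The plan is to reduce the statement about $T_{\l,\m}$ to the known characterization of complete positivity for the diagonal maps $T_{(\l_1,\l_2,\l_3)}$ established in Theorem \ref{CP1}. Observe first that the operator $T_{\l,\m}$ corresponds, in the notation of \eqref{T(x)5.1}, to $\mathbf{A}\wb=\l\wb$ on one leg and $\mathbf{C}\wb=\mu\wb$ on the other. Unlike the case $\mathbf{C}=\mathbf{A}$, here the two legs carry different multiples of the identity, so $T_{\l,\m}$ is \emph{not} literally one of the $T_{(\l_1,\l_2,\l_3)}$; it is the map $T$ of \eqref{T(x)3} built from $\Phi=\Phi_{(2\l,2\l,2\l)}$ and $\Psi=\Psi_{(2\m,2\m,2\m)}$. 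So the first step is to write down, via Choi's theorem (as in the proof of Theorem \ref{CP1}), the $8\times 8$ Choi matrix $\widehat T_{\l,\m}$, whose blocks are $T_{\l,\m}(e_{ij})$, $i,j\in\{1,2\}$.

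Next I would compute these blocks explicitly. Using \eqref{T(x)3} together with the standard expressions $e_{11}=\frac12(\id+\sigma_3)$, $e_{22}=\frac12(\id-\sigma_3)$, $e_{12}=\frac12(\sigma_1+i\sigma_2)$, one gets
\begin{eqnarray*}
T_{\l,\m}(e_{11})&=&\tfrac12\big(\id\o\id+\l\sigma_3\o\id+\mu\,\id\o\sigma_3\big),\\
T_{\l,\m}(e_{12})&=&\tfrac12\big(\l(\sigma_1+i\sigma_2)\o\id+\mu\,\id\o(\sigma_1+i\sigma_2)\big),
\end{eqnarray*}
with $T_{\l,\m}(e_{22})=\id\o\id-T_{\l,\m}(e_{11})$ and $T_{\l,\m}(e_{21})=T_{\l,\m}(e_{12})^*$. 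Since $\|\wb\|+\|\rb\|\le w_0$ is needed only for positivity and $|\l|,|\mu|\le\frac12$ keeps the diagonal blocks $T_{\l,\m}(e_{11}),T_{\l,\m}(e_{22})$ positive, I would then apply the Schur-complement criterion \cite[Theorem 1.3.3]{Bhat} exactly as in Theorem \ref{CP1}: $\widehat T_{\l,\m}\ge0$ iff $T_{\l,\m}(e_{11})-T_{\l,\m}(e_{12})\,T_{\l,\m}(e_{22})^{-1}\,T_{\l,\m}(e_{21})\ge0$. The resulting $4\times4$ matrix will, by the block structure above, decouple into two $2\times2$ blocks; diagonalizing it gives four eigenvalues, and demanding the nontrivial ones be nonnegative should produce precisely the two inequalities $\l+\mu+1-2\sqrt{\l^2-\l\mu+\mu^2}\ge0$ and $\l+\mu\le1$ (the latter coming from keeping $T_{\l,\m}(e_{22})$, equivalently $1-2\l-2\mu\cdot(\cdots)$, from vanishing, and after discarding the always-valid auxiliary inequality the way the redundant condition \eqref{T(x)5.19} was discarded in Theorem \ref{CP1}).

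The main obstacle is the bookkeeping in the Schur-complement step: $T_{\l,\m}(e_{22})^{-1}$ has both $\l$ and $\mu$ entering asymmetrically through the two tensor legs, so the entries of the reduced matrix are rational in $\l,\mu$ and one must be careful about the sign of $1\pm2\l\pm2\mu$-type denominators (paralleling the delicate treatment of $\l_3=\pm\frac12$ in Theorem \ref{CP1}). I expect that, just as there, the endpoint cases $\l+\mu=1$ (where a diagonal block degenerates) need to be examined separately by Sylvester's criterion on the leading principal minors of $\widehat T_{\l,\m}$, and that the square-root expression $\sqrt{\l^2-\l\mu+\mu^2}=\frac12\sqrt{(2\l-\mu)^2+3\mu^2}$ is manifestly real so no extra positivity-under-the-radical check is required. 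Once the eigenvalue computation is carried out and the redundant inequality is shown to follow from the other two (by the same elementary argument used for \eqref{T(x)5.9} and \eqref{T(x)5.19}), the stated equivalence follows.
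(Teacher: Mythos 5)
Your proposal is correct in substance and would yield the theorem, but it takes a genuinely longer route than the paper. Both arguments start from Choi's criterion and the same blocks $T_{\l,\m}(e_{ij})$ (your expressions for $T_{\l,\m}(e_{11})$ and $T_{\l,\m}(e_{12})$ agree with the paper's up to the ordering of the tensor legs). From there the paper simply writes out the $8\times 8$ Choi matrix $\widehat T_{\l,\m}$ and computes its eigenvalues directly; they are $\l+\m+1\pm2\sqrt{\l^2-\l\m+\m^2}$ and $1-\l-\m$ (with multiplicities), so the two stated inequalities are immediate, with no matrix inversion, no degenerate cases, and no redundant auxiliary inequalities to discard. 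Your plan instead re-runs the Schur-complement machinery of Theorem \ref{CP1}. That works, but it costs more: (i) the Schur criterion needs $T_{\l,\m}(e_{22})$ invertible, so the loci $\l+\m=1$ and $|\l-\m|=1$ must be treated separately (you do flag this, and it is genuinely needed); (ii) the Schur complement is not two $2\times2$ blocks but a direct sum $1\oplus 2\oplus 1$: its $(1,1)$ and $(4,4)$ entries decouple and only the middle $2\times2$ block is nontrivial --- both the $(1,1)$ entry and the $2\times2$ determinant carry the factor $(1-\l-\m)\big(1+2(\l+\m)-3(\l-\m)^2\big)$, which after dividing by $1-\l-\m>0$ is exactly $1+\l+\m\ge 2\sqrt{\l^2-\l\m+\m^2}$, while the $(4,4)$ entry gives $\l+\m\le1$; this, rather than ``keeping $T_{\l,\m}(e_{22})$ from vanishing,'' is where that inequality comes from (it is already forced by positivity of the diagonal blocks); (iii) one must still check that the diagonal entries of the $2\times2$ block are redundant given the other conditions. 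So your route is a valid, self-contained alternative in the style of the paper's Theorem \ref{CP1}, whereas the paper's own proof of this statement is the shorter one: a direct eigenvalue computation of $\widehat T_{\l,\m}$ that avoids all the Schur-complement and boundary bookkeeping.
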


\begin{proof} It is know \cite{Choi} that the complete positivity of
$T_{\l,\m}$ is equivalent to the positivity of the following
matrix
\begin{eqnarray*}
\widehat{T}_{\l,\m}=\left(%
\begin{array}{cc}
  T_{\l,\m}(e_{11}) & T_{\l,\m}(e_{12}) \\
  T_{\l,\m}(e_{21}) & T_{\l,\m}(e_{22}) \\
\end{array}%
\right).
\end{eqnarray*}
One can calculate that
\begin{eqnarray*}
  &&T_{\l,\m}(e_{11})=\frac{1}{2}\left(%
\begin{array}{cccc}
  1+M_1& 0 & 0 & 0 \\
  0 & 1-M_2 & 0 & 0 \\
  0 & 0 & 1+M_2 & 0 \\
  0 & 0 & 0 & 1-M_1 \\
\end{array}%
\right),\\
 &&T_{\l,\m}(e_{12})=\frac{1}{2}\left(%
\begin{array}{cccc}
  0 & 2\l & 2\m & 0 \\
  0 & 0 & 0 & 2\m \\
  0 & 0 & 0 & 2\l \\
  0 & 0 & 0 & 0 \\
\end{array}%
\right)
\end{eqnarray*}
and $T_{\l,\m}(e_{22})=\id\o\id-T_{\l,\m}(e_{11})$,
$T_{\l,\m}(e_{21})=T_{\l,\m}(e_{12})^*$.  Where $M_1=\l+\m $,
$M_2=\l-\m$. Therefore, we obtain

\begin{eqnarray*}
  \widehat{T}_{\l,\m}=\frac{1}{2}
  \left(%
\begin{array}{cccccccc}
  1+M_1 & 0 & 0 & 0 & 0 & 2\l & 2\m & 0 \\
  0 & 1-M_2 & 0 & 0 & 0 & 0 & 0 & 2\m \\
  0 & 0 & 1+M_2 & 0 & 0 & 0 & 0 & 2\l \\
  0 & 0 & 0 & 1-M_1 & 0 & 0 & 0 & 0 \\
  0 & 0 & 0 & 0 & 1-M_1 & 0 & 0 & 0 \\
  2\l & 0 & 0 & 0 & 0 & 1+M_2 & 0 & 0 \\
  2\m & 0 & 0 & 0 & 0 & 0 & 1-M_2 & 0 \\
  0 & 2\m & 2\l & 0 & 0 & 0 & 0 & 1+M_1\\
\end{array}%
\right)
\end{eqnarray*}

One can calculate the the eigenvalues of $\widehat{T}_{\l,\m}$ are
the followings
\begin{eqnarray*}
  &&\l+\m+1+2\sqrt{\l^2-\l\m+\m^2},\\
  &&\l+\m+1-2\sqrt{\l^2-\l\m+\m^2},\\
  &&1-\l-\m.
\end{eqnarray*}

Hence, $\widehat{T}_{\l,\m}$ is positive if and only if the the
eigenvalues are positive, which implies the assertion.
\end{proof}

\begin{figure}[h] \centering
\begin{center}
\includegraphics[width=0.55\columnwidth,clip]{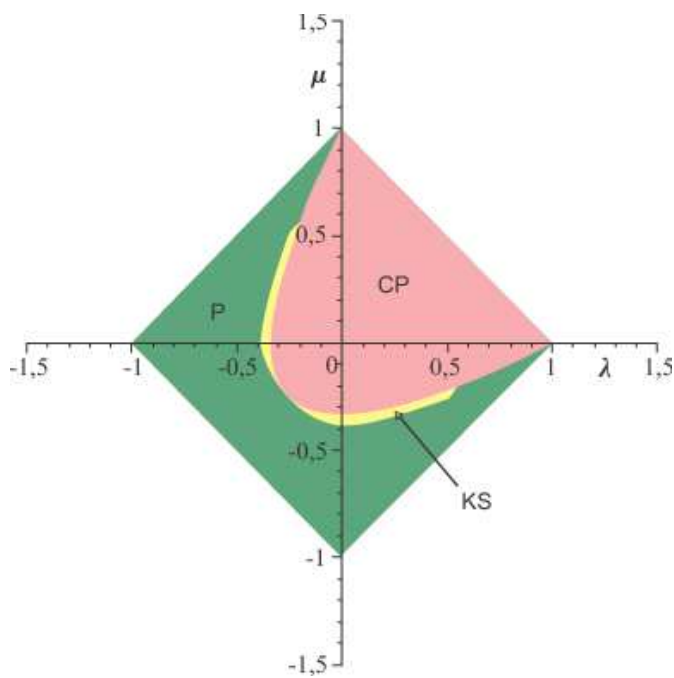}
\caption{If the pair $(\l,\m)$ does not belong to the red region,
then the corresponding mapping $T_{\l,\m}$ is not CP.}
\end{center}
\end{figure}

\section*{Acknowledgement} The first and second named authors (F.M., H.A.) acknowledges the Scientific and
Technological Research Council of Turkey (TUBITAK) for support, and
Zirve University (Gazinatep) for kind hospitality.

\end{document}